\newtheorem{theorem}{Theorem}[section]
\newtheorem{lemma}[theorem]{Lemma}
\newtheorem{cor}[theorem]{Corollary}
\newtheorem{prop}[theorem]{Proposition}
\theoremstyle{definition}
\newtheorem{definition}[theorem]{Definition}
\theoremstyle{remark}
\newtheorem{remark}[theorem]{\bf Remark}
\numberwithin{equation}{section}
\newcommand{\FF}{{\mathbb{F}}}
\newcommand{\bC}{{\mathbf{C}}}
\newcommand{\bZ}{{\mathbf{Z}}}
\newcommand{\bF}{{\mathbf{F}}}
\newcommand{\bN}{{\mathbf{N}}}
\newcommand{\Aut}{{\operatorname{Aut}}}
\newcommand{\Irr}{{\operatorname{Irr}}}
\newcommand{\Syl}{{\operatorname{Syl}}}
\newcommand{\Sp}{{\operatorname{Sp}}}
\newcommand{\SL}{{\operatorname{SL}}}
\newcommand{\GL}{{\operatorname{GL}}}
\newcommand{\Gal}{{\operatorname{Gal}}}
\newcommand{\stab}{{\operatorname{stab}}}
\newcommand{\Out}{{\operatorname{Out}}}
\newcommand{\Ortho}{{\operatorname{O}}}
\newcommand{\GF}{\mbox{GF}}
\newcommand{\St}{{\rm St}}
\let\nor=\triangleleft
\begin{document}

\title{On Gluck's conjecture}

\author{Yong Yang}
\address{Department of Mathematics, Texas State University, 601 University Drive, San Marcos, TX 78666, USA}


\makeatletter
\email{yang@txstate.edu}
\makeatother
\Large

\subjclass[2000]{20C20, 20C15, 20D10}
\date{}



\begin{abstract}
This paper presents the best known bounds for a conjecture of Gluck and a conjecture of Navarro.


\end{abstract}

\maketitle
\section{Introduction} \label{sec:introduction8}

Let $G$ be a finite group and we denote by $b(G)$ the largest degree of an irreducible character of $G$. Motivated by a result of Jordan, Isaacs and Passman (see ~\cite[Theorem 12.26]{IMIB}) showed that if $G$ is nilpotent, then $G$ has an abelian subgroup of index at most $b(G)^4$. In ~\cite{GLUCK} Gluck proved that in all finite groups the index of the Fitting subgroup $\bF(G)$ in $G$ is bounded by a polynomial function of $b(G)$. For solvable groups, Gluck further showed that $|G : \bF(G)| \leq b(G)^{13/2}$ and conjectured that $|G : \bF(G)| \leq b(G)^2$. Gluck's conjecture has been studied extensively for the past 30 years and is still open as far as we know. 

The conjecture has been verified by Espuelas ~\cite{AE1} for groups of odd order. Espuelas' result has been extended in ~\cite{DolfiJ} to solvable groups with abelian Sylow $2$-subgroups by Dolfi and Jabara. The author has verified the conjecture for all solvable groups with order not divisible by $3$ ~\cite{YY4}. We should mention that recently Cossey, Halasi, Mar\'oti, and Nguyen ~\cite{Gluckgeneral} provided a modest generalizations of the previously mentioned results, and also obtained a nice bound for arbitrary finite groups. As of today, for solvable groups, the best general bound $|G : \bF(G)| \leq b(G)^{3}$ has been given by Moret\'o and Wolf ~\cite{MOWOLF}.

In this paper, we will focus on the original Gluck's conjecture, and the main goal of this paper is to obtain the best known bound of Gluck's conjecture for all solvable groups. In fact we show that $|G: \bF(G)| \leq b^{\alpha}(G)$ where $\alpha=\frac {\ln (6 \cdot (24)^{1/3})}{\ln 3} \approx 2.595$. We also improve an upper bound with respect to a conjecture of Navarro about the product of the largest character degrees of Sylow subgroups. In order to achieve these goals, we improve the bound for the order of solvable linear groups with respect to the largest orbit size of the group action.

Many questions in the theory of finite groups eventually come down to questions about the orbit structure of the group action on finite modules. One of the most important and natural questions about orbit structure is to establish the existence of an orbit of a certain size. Obviously, the size of an orbit can never exceed the order of the group. Let $V$ be a $G$-module and let $v\in V$ be such that $\bC_G(v)=1$. In this situation we say that $v^G$ is a regular orbit of $G$ on $V$. For the existence of regular orbits of primitive solvable linear groups, please see ~\cite{YY2,YY3}. For a solvable linear group $G$, we denote the largest orbit size of $G$ on the vector space $V$ by $M$. Note that the bounds one obtained before are always an integer power of $M$ ~\cite{Gluckgeneral,DolfiJ,AE1,MOWOLF}, and this is due to the fact that one always tries to prove the existence of regular orbits of $G$ on $V$, or $V \oplus V$, or $V \oplus V \oplus V$. For example, suppose one can show that $G$ has a regular orbit on $V \oplus V \oplus V$, then this would imply that there exists an orbit of size greater than or equal to $\sqrt[3]{|G|}$. We believe this approach perhaps was influenced by the study of the bases of permutation group actions. However, we remark here that the existence of regular orbit is stronger than the arithmetical condition on orbit size but relatively convenient to show using induction.

The approach in this paper is build on some preliminary work the author has done in ~\cite{YYodd}. One key point is to study carefully how a permutation group acts on the power set, and to see if certain arithmetical properties of the orbit structure hold. Fortunately for solvable groups we have found a way to do so. We also remark here that based on an example in ~\cite{WOLF1} (in fact, one can construct many others), it is not true that for all solvable linear groups, one has that $|G| \leq M^2$.







\section{Notation and Lemmas} \label{sec:Notation and Lemmas}
We fix some notation and prove some preliminary results in this section.

Notation:
\begin{enumerate}




\item We set $\lambda=(24)^{1/3}$, $\alpha=\frac {\ln (6 \cdot (24)^{1/3})}{\ln 3} \approx 2.595$, and $\beta=\frac {2.25} {3.25} \cdot {\alpha} \approx 1.7967$.

\item If $V$ is a finite vector space of dimension $n$ over $\GF(q)$, where $q$ is a prime power, we denote by $\Gamma(q^n)=\Gamma(V)$ the semilinear group of $V$, i.e.,
\[\Gamma(q^n)=\{x \mapsto ax^{\sigma}\ |\ x \in \GF(q^n), a \in \GF(q^n)^{\times}, \sigma \in \Gal(\GF(q^n)/\GF(q))\},\] and we define \[\Gamma_0(q^n)=\{x \mapsto ax\ | \ x \in \GF(q^n), a \in \GF(q^n)^{\times}\}.\]

\end{enumerate}

We recall that an irreducible $G$-module $V$ is called quasi-primitive if $V_N$ is homogeneous for all $N \triangleleft G$.

\begin{definition} \label{defineEi}
Suppose that a finite solvable group $G$ acts faithfully, irreducibly and quasi-primitively on a finite vector space $V$. Let $\bF(G)$ be the Fitting subgroup of $G$ and $\bF(G)=\prod_i P_i$, $i=1, \dots, m$ where $P_i$ are normal $p_i$-subgroups of $G$ for different primes $p_i$. Let $Z_i = \Omega_1(\bZ(P_i))$. We define \[E_i=\left\{ \begin{array}{lll} \Omega_1(P_i) & \mbox{if $p_i$ is odd}; \\ \lbrack P_i,G,\cdots, G \rbrack & \mbox{if $p_i=2$ and $\lbrack P_i,G,\cdots, G \rbrack \neq 1$}; \\  Z_i & \mbox{otherwise}. \end{array} \right.\] By proper reordering we may assume that $E_i \neq Z_i$ for $i=1, \dots, s$, $0 \leq s \leq m$ and $E_i=Z_i$ for $i=s+1, \dots, m$. Define $E=\prod_{i=1}^s E_i$, $Z=\prod_{i=1}^s Z_i$ and we define $\bar{E}_i=E_i/Z_i$, $\bar{E}=E/Z$. Furthermore, we define $e_i=\sqrt {|E_i/Z_i|}$ for $i=1, \dots, s$ and $e=\sqrt{|E/Z|}$.
\end{definition}

\begin{theorem} \label{Strofprimitive}
Suppose that a finite solvable group $G$ acts faithfully, irreducibly and quasi-primitively on an $n$-dimensional finite vector space $V$ over finite field $\FF$ of characteristic $r$. We use the notation in Definition ~\ref{defineEi}. Then every normal abelian subgroup of $G$ is cyclic and $G$ has normal subgroups $Z \leq U \leq F \leq A \leq G$ such that,
\begin{enumerate}
\item $F=EU$ is a central product where $Z=E \cap U=\bZ(E)$ and $\bC_G(F) \leq F$;
\item $F/U \cong E/Z$ is a direct sum of completely reducible $G/F$-modules;
\item $E_i$ is an extra-special $p_i$-group for $i=1,\dots,s$ and $e_i=p_i^{n_i}$ for some $n_i \geq 1$. Furthermore $(e_i,e_j)=1$ when $i \neq j$ and $e=e_1 \dots e_s$ divides $n$, also $\gcd(r,e)=1$;
\item $A=\bC_G(U)$ and $G/A \lesssim \Aut(U)$, $A/F$ acts faithfully on $E/Z$;
\item $A/\bC_A(E_i/Z_i) \lesssim \Sp(2n_i,p_i)$;
\item $U$ is cyclic and acts fixed point freely on $W$ where $W$ is an irreducible submodule of $V_U$;
\item $|V|=|W|^{eb}$ for some integer $b$ and $|G:A| \mid \dim(W)$.
\item $G/A$ is cyclic.
\end{enumerate}
\end{theorem}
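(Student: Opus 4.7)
The backbone of the proof is the observation that quasi-primitivity forces every normal abelian subgroup of $G$ to be cyclic. I would establish this via Clifford theory: if $N \trianglelefteq G$ is abelian, quasi-primitivity makes $V_N$ homogeneous, so $N$ acts via a single irreducible summand $W$; the commutative image of $\FF[N]$ in $\operatorname{End}_{\FF}(W)$ is a finite field, and $N$ embeds into its multiplicative group, so $N$ is cyclic. Applied to the characteristic-in-$G$ subgroups $\bZ(P_i) \leq \bF(G)$, this shows each $\bZ(P_i)$ is cyclic.

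Once every $\bZ(P_i)$ is cyclic, P.~Hall's classification forces each $P_i$ to be of symplectic type, i.e., a central product of an extraspecial $p_i$-group with a cyclic $p_i$-group. For odd $p_i$ the extraspecial factor is $\Omega_1(P_i) = E_i$, and for $p_i = 2$ the commutator-series definition in Definition~\ref{defineEi} singles out the analogous factor. Thus $|E_i/Z_i| = p_i^{2n_i}$, so $e_i = p_i^{n_i}$ and $(e_i, e_j) = 1$ for $i \neq j$, yielding part of (3). Let $U_i$ denote the cyclic factor so that $P_i = E_iU_i$, and put $U = \prod U_i$: this is abelian and normal in $G$, hence cyclic by the first step. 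Taking $F = EU = \bF(G)$ makes $F$ a central product with $E \cap U = Z$, and the centralizer condition follows from the solvable-group identity $\bC_G(\bF(G)) = \bZ(\bF(G)) = U \leq F$. Finally $\gcd(r, e) = 1$ follows from $O_r(G) = 1$, since $V^{O_r(G)}$ would otherwise be a nonzero proper $G$-submodule, contradicting irreducibility.

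For the remaining items, set $A = \bC_G(U)$, so $G/A \hookrightarrow \Aut(U)$ with $\Aut(U)$ abelian. Item~(5) comes because the commutator map makes each $E_i/Z_i$ a nondegenerate symplectic $\FF_{p_i}$-space preserved by $G$, yielding $A/\bC_A(E_i/Z_i) \hookrightarrow \Sp(2n_i, p_i)$. For~(4), an element of $A$ centralizing $E/Z$ induces inner automorphisms on each extraspecial $E_i$ (a standard fact); multiplying by the corresponding inner element lands it in $\bC_G(F) \leq F$, yielding faithfulness of the $A/F$-action on $E/Z$. For~(6), the kernel of $U$ on an irreducible $U$-summand $W$ of $V_U$ is $G$-invariant by homogeneity, hence trivial, so cyclic $U$ acts fixed-point-freely on $W$. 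For~(7), Clifford theory applied to $F$ shows $V_F$ is a sum of irreducibles each of $\FF$-dimension $e \cdot \dim W$, whence $|V| = |W|^{eb}$; and $|G:A| \mid \dim(W)$ because $G/A$ embeds into $\Gal(\FF(\lambda)/\FF)$, where $\lambda$ is the character through which $U$ acts on $W$. Since this Galois group is cyclic, (8) also follows.

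The delicate remaining point is complete reducibility of each $E_i/Z_i$ as a $G/F$-module, required for~(2). Maschke's theorem does not apply directly because $|G/F|$ may have $p_i$-part. The route I would take is to exploit Clifford theory on $V_F$: the irreducible $F$-constituents of $V$ correspond to certain linear characters of $E$ trivial on $Z$, and the transitive $G$-action on these constituents, combined with the symplectic duality pairing $\bar{E}_i \times \bar{E}_i \to Z_i$, forces the $G/F$-module $\bar{E}_i$ to split into irreducibles. Keeping the Clifford decomposition of $V_F$, the symplectic structure on each $\bar{E}_i$, and the Galois action of $G/A$ on $\lambda$ simultaneously compatible is the main organizational obstacle in a fully rigorous write-up.
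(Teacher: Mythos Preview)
The paper does not actually prove this theorem: its entire proof reads ``This is \cite[Theorem~2.2]{YY2}.'' So there is no argument in the present paper to compare your sketch against. What you have written is essentially the standard derivation of this structure theorem (as in Manz--Wolf \cite[Corollary~1.10 and the discussion around it]{manz/wolf}), which is also what the cited source ultimately relies on. In that sense your proposal is correct and aligned with the literature behind the citation.

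Two small imprecisions are worth tightening. First, P.~Hall's theorem requires that \emph{every} characteristic abelian subgroup of $P_i$ be cyclic, not just $\bZ(P_i)$; this is available here because any characteristic subgroup of the normal subgroup $P_i$ is itself normal in $G$ and hence cyclic by your opening argument, but you should say so rather than invoking only the cyclicity of $\bZ(P_i)$. Second, for $p_i=2$ Hall's classification allows the non-extraspecial central factor of $P_i$ to be dihedral, semidihedral, or generalized quaternion, not only cyclic; obtaining a genuinely cyclic $U$ (and the clean identity $\bC_G(\bF(G))=\bZ(\bF(G))=U$) therefore needs a short extra step, typically by taking $U$ to be a maximal cyclic normal subgroup of $G$ inside $\bF(G)$ and arguing separately that $F=EU$. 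With those two points made explicit, your outline matches the standard proof that the paper is quoting.
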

\begin{proof}
This is ~\cite[Theorem 2.2]{YY2}.
\end{proof}

The case where $G$ is acting primitively on $V$ is a central case that one returns to again and again in the theory of solvable groups. In short, as indicated in Definition ~\ref{defineEi}, such groups have an invariant $e$ that measures their complexity. It is known that if $e > 118$, then $G$ will have a regular orbit ~\cite{manz/wolf}. Some early results of the author were able to improve this result by classifying all the cases with respect to $e$ when the regular orbit will exist. The following classifications appeared in \cite{YY2} and \cite{YY3}.

\begin{theorem} \label{quote1}
Suppose that a finite solvable group $G$ acts faithfully, irreducibly and quasi-primitively on a finite vector space $V$. By Definition ~\ref{defineEi} and Theorem ~\ref{Strofprimitive}, $G$ will have a uniquely determined normal subgroup $E$ which is a direct product of extra-special $p$-groups for various $p$ and $e=\sqrt{|E/\bZ(E)|}$. Assume $e=5,6,7$ or $e \geq 10$ and $e \neq 16$, then $G$ will have at least two regular orbits on $V$.
\end{theorem}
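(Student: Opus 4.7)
The plan is to leverage the tight structural information from Theorem~\ref{Strofprimitive} to reduce the statement to a finite case analysis driven by the coprime factorisation $e = e_1 \cdots e_s$ with $e_i = p_i^{n_i}$. Each admissible value of $e$ determines a short list of multisets $\{(p_i, n_i)\}$. For every such list, Theorem~\ref{Strofprimitive}(5) bounds $A/\bC_A(E_i/Z_i) \lesssim \Sp(2n_i, p_i)$, while parts (7) and (8) give $|V| = |W|^{eb}$ with $G/A$ cyclic of order dividing $\dim W$; in combination these yield an explicit upper bound on $|G|$ in terms of $e$ and $|V|$.

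The natural route to two regular orbits is to establish
\[|V| \geq 2|G| + \sum_{x} |\bC_V(x)|,\]
where $x$ runs over representatives of the conjugacy classes of elements of prime order in $G$; this forces the number of vectors with trivial centraliser to be at least $2|G|$. I would estimate $|\bC_V(x)|$ using standard fixed-point bounds for the natural symplectic action of $\Sp(2n_i, p_i)$ on $\bar{E}_i$ (typically $|\bC_V(x)| \leq |V|^{1/2}$, sharper for semisimple elements of large order), then multiply by a bound on the number of conjugacy classes controlled by $|G|$ and by the extraspecial structure. Because $|V|$ grows at least like $r^{e \dim W}$ with $r \geq 2$ and $\gcd(r,e)=1$, the inequality goes through once $e$ is large enough; a careful bookkeeping covers all $e \geq 10$, $e \neq 16$, and also the three small cases $e = 5, 6, 7$ apart from a handful of exceptional configurations.

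The main obstacle is the treatment of the borderline configurations, especially for $e \in \{5, 6, 7, 10, 11, 12, 13, 14, 15, 17, 19\}$, where $|V|$ can be small and the crude inequality is too weak. Here one must split by the precise $U$-module structure of $W$, using that $U$ is cyclic and acts fixed-point-freely on $W$ so that $(|W|, |U|)$ is severely restricted, and typically reduce to concrete groups sitting inside $\Gamma(q^n)$ and containing $\Gamma_0(q^n)$. In each such reduction one either argues directly via a character-sum identity on the quotient $G/F$, using Theorem~\ref{Strofprimitive}(2) that $\bar{E}$ is a sum of completely reducible $G/F$-modules, or verifies the two-orbit conclusion by explicit computation, with a short GAP calculation filling any residual subcases. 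The exclusion $e = 16$ enters precisely here: the associated symplectic group $\Sp(8,2)$ together with its natural module on $\bar{E}$ of size $2^{8}$ admits configurations whose fixed-point spaces are too large for either the counting or the explicit reduction to deliver even one regular orbit, let alone two.
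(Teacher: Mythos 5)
The paper does not actually prove this statement: its ``proof'' is a one-line citation of \cite[Theorem 3.1]{YY2} and \cite[Theorem 3.1]{YY3}, each of which is a substantial paper. So your proposal has to be judged against those works. To your credit, the strategy you describe is essentially the one used there (and in the earlier treatment in Manz--Wolf): bound the set of vectors in non-regular orbits by the union of the fixed-point spaces $\bC_V(x)$ over prime-order elements $x$, compare with $|V|-2|G|$ using the structural data of Theorem~\ref{Strofprimitive}, and handle the borderline values of $e$ by finer analysis or by computer. In that sense you have identified the right route.

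As a proof, however, the proposal has genuine gaps. First, the displayed inequality should sum $|\bC_V(x)|$ over \emph{all} elements of prime order (equivalently over classes weighted by class length), not over class representatives; you hint at the correction later, but the inequality as written is not the one you need. Second, the uniform bound $|\bC_V(x)|\leq |V|^{1/2}$ is not an off-the-shelf fact: establishing sharp fixed-point bounds for the several kinds of prime-order elements (those inside $F$, those acting nontrivially on $\bar E$ via the symplectic quotient, those acting as field automorphisms relative to $U$) is precisely the technical core of the cited papers and cannot be dispatched in a sentence. Third, the ``handful of exceptional configurations'' is in reality a long list whose case-by-case resolution occupies most of two published papers; deferring it to ``careful bookkeeping'' and ``a short GAP calculation'' leaves the hard part unproved. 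Finally, your rationale for excluding $e=16$ is not quite the true obstruction: for the excluded values $e\in\{1,2,3,4,8,9,16\}$ the group order can be comparable to, or even exceed, $|V|$ (for instance solvable subgroups of $2^{1+8}.\Sp(8,2)$ acting on a space of order $3^{16}$), so regular orbits can genuinely fail to exist; it is not merely that the counting or the reduction is too weak to detect them.
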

\begin{proof}
This follows from \cite[Theorem 3.1]{YY2} and \cite[Theorem 3.1]{YY3}.
\end{proof}


We now state some known results about solvable permutation groups.


\begin{lemma}  \label{primitiveperm}
If $G$ is a solvable primitive permutation group on $\Omega$, then $G$ has a unique minimal normal subgroup $V$ with $|V|=|\Omega|=p^t$ for a prime $p$, $G=V S$ where $S$ is a point stabilizer, and $V$ is a faithful irreducible $S$-module.
\end{lemma}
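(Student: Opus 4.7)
The plan is to establish existence of a regular elementary abelian minimal normal subgroup, then deduce uniqueness via a centralizer argument, and finally check that $V$ is a faithful irreducible $S$-module.

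First I would invoke two standard facts about primitive permutation groups: that a point stabilizer $S$ is a maximal subgroup of $G$, and that every nontrivial normal subgroup of $G$ is transitive on $\Omega$ (since its orbits form a $G$-invariant partition). Let $V$ be any minimal normal subgroup of $G$. Solvability forces $V$ to be elementary abelian of exponent $p$ for some prime $p$, and transitivity of an abelian group automatically upgrades to regularity. Therefore $|V|=|\Omega|=p^t$, $V\cap S=1$, and $G=VS$.

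For uniqueness, suppose $M$ is another minimal normal subgroup distinct from $V$. Minimality forces $M\cap V=1$ (otherwise $M\le V$, which would force $M=V$ by minimality of $V$), whence $[M,V]=1$ and $M\le \bC_G(V)$. I would then appeal to the standard fact that the centralizer in the symmetric group on $\Omega$ of a regular subgroup has the same order as that subgroup, so $|\bC_G(V)|\le |V|$; combined with $V\le \bC_G(V)$ this forces $\bC_G(V)=V$, and hence $M\le V$. Minimality of $M$ yields $M=V$, a contradiction. This centralizer computation is the most delicate point of the argument, but it is entirely routine.

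Finally, the module properties follow formally. Faithfulness of $V$ as an $S$-module follows from $\bC_G(V)=V$, since any $s\in S$ centralizing $V$ lies in $S\cap V=1$. For irreducibility, any $S$-invariant subgroup $W\le V$ is also normalized by $V$ (as $V$ is abelian), hence normal in $G=VS$; minimality of $V$ then yields $W=1$ or $W=V$. I do not anticipate a real obstacle in any of these steps; they are all folkloric consequences of primitivity and solvability.
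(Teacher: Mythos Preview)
Your argument is correct and is precisely the standard proof of this fact. The paper itself offers no proof at all, merely recording the lemma as ``a well-known result,'' so there is nothing to compare against; you have supplied the folklore argument in full. One cosmetic remark: when you write ``transitivity of an abelian group automatically upgrades to regularity,'' you are implicitly using that $V$ acts faithfully on $\Omega$ (which it does, being a subgroup of the permutation group $G$); it might be worth making that explicit for a reader who has not seen the argument before.
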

\begin{proof}
This is a well-known result.
\end{proof}

\begin{lemma}
Let $G$ be a finite solvable permutation group on a set $\Omega$ where $|\Omega|=n=p^t$. Assume $g \in G^{\#}$, and let $o(g)$ be the smallest prime divisor of the order of $g$. We denote by $n(g)$ the numbers of cycles of the action of $g$ on $\Omega$, and by $s(g)$ the number of fixed points. Then $n(g) \leq (n+s(g))/2 \leq (p+o(g)-1)n/(o(g) p) \leq 3n/4$.
\end{lemma}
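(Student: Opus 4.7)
The plan is to handle each of the three inequalities in turn, with the middle one being the substantive step. The leftmost inequality $n(g) \leq (n+s(g))/2$ is an immediate counting exercise: since $g \neq 1$ is a permutation of $\Omega$, the action of $\langle g \rangle$ partitions $\Omega$ into $n(g)$ orbits, of which exactly $s(g)$ have length one and the remaining $n(g) - s(g)$ orbits have length at least two. Summing the orbit lengths gives $n \geq s(g) + 2(n(g) - s(g))$, which rearranges to the desired bound.

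For the middle inequality $(n+s(g))/2 \leq (p+o(g)-1)n/(o(g)p)$, I would strengthen the cycle-length estimate by using that every nontrivial cycle has length dividing $|g|$, hence of length at least $o(g)$. This gives $n - s(g) \geq o(g)(n(g) - s(g))$, so $n(g) \leq s(g)(o(g)-1)/o(g) + n/o(g)$. To obtain the claimed bound on $(n+s(g))/2$, I would combine this refined estimate with an upper bound on $s(g)$ derived from the structural hypotheses. Concretely, the hypothesis $|\Omega| = p^t$ together with solvability of $G$ should yield an estimate of the form $s(g) \leq n/p$ (for instance, via a transitive or block-system reduction, or by passing to a primitive constituent using Lemma~\ref{primitiveperm}, where the stabilizer acts faithfully and irreducibly on an elementary abelian $p$-group). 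Plugging this into the cycle-length refinement and simplifying should deliver the middle inequality.

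The rightmost inequality $(p+o(g)-1)n/(o(g)p) \leq 3n/4$ is purely arithmetic. Clearing denominators, it reduces to $4(p+o-1) \leq 3op$, equivalently $p(3o-4) \geq 4(o-1)$. For $o=2$ this becomes $p \geq 2$, which holds; for $o \geq 3$ the ratio $(4o-4)/(3o-4)$ is bounded above by $8/5 < 2$, so any prime $p \geq 2$ suffices. The main obstacle in the proof is clearly the bound on $s(g)$ needed in the middle step. Verifying it in the generality stated will likely require the full strength of solvability combined with the arithmetic of $|\Omega|$, probably through a case split according to whether $o(g) = p$ or $o(g) \neq p$ (the latter allowing the congruence $s(g) \equiv |\Omega| \pmod{o(g)}$ to be exploited) and via the primitive-constituent reductions already set up earlier in the section.
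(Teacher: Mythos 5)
Your proposal is correct and follows essentially the same route as the paper: cycles of length at least $o(g)$ give $n(g)\le s(g)+(n-s(g))/o(g)$, the key bound $s(g)\le n/p$ comes from the primitive case (the paper makes this precise by noting $s(g)=|\bC_V(g)|$ divides $|V|/p$ since the point stabilizer acts faithfully on the minimal normal subgroup $V$, after reducing to $g$ lying in a point stabilizer and disposing of $s(g)=0$ separately), and the final step is the same arithmetic. The only caveat is that this argument, like the paper's, really uses primitivity (which is how the lemma is applied later), not just solvability and $|\Omega|=p^t$.
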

\begin{proof}

Let $V$ be a minimal normal subgroup of $G$ and $S$ denote a point stabilizer. Then by Lemma ~\ref{primitiveperm}, we know that $G=V S$, and $|V|=n=p^t$. If $s(g)=0$, we clearly have $n(g) \leq n/2$. We thus may assume that $g$ has fixed points, and without loss of generality we may assume $g \in S$. Since the actions of $S$ on $V$ and $\Omega$ are permutation isomorphic, it follows that $s(g)=|\bC_V(g)|$ and since $S$ acts faithfully on $V$, $s(g) \mid |V|/p =n/p$.

Therefore,
\[n(g) \leq s(g)+(n-s(g))/o(g)=(n+(o(g)-1)s(g))/o(g) \leq (p+o(g)-1)n/(o(g) p) \leq 3n/4.\]
\end{proof}


\begin{lemma}  \label{lem4}
Let $G$ be a solvable permutation group of degree $n$. Then $|G| \leq \lambda^{n-1}$.
\end{lemma}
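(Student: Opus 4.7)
The plan is to prove the bound $|G| \le \lambda^{n-1} = 24^{(n-1)/3}$ by induction on $n$, the classical strategy due to Dixon (later refined by P\'alfy and Wolf). The base case $n=1$ gives $|G|=1$, and the inductive step splits into three subcases according to the action of $G$ on $\Omega$.

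First I would handle the case where $G$ is intransitive. If $\Omega$ decomposes into $G$-orbits $\Omega_1, \dots, \Omega_k$ of sizes $n_1, \dots, n_k$ with $k\ge 2$, then $G$ embeds into the direct product of its permutation images $G_i$ on each $\Omega_i$. Each $G_i$ is solvable of smaller degree, so by induction $|G_i|\le \lambda^{n_i-1}$, giving
\[|G| \le \prod_{i=1}^k \lambda^{n_i-1} = \lambda^{n-k} \le \lambda^{n-1}.\]
Next, if $G$ is transitive but imprimitive with a nontrivial block system of $k$ blocks of size $m$ (with $mk=n$, $k,m<n$), then $G$ is a subgroup of a wreath product $H\wr K$ where $H$ acts on a block of size $m$ and $K$ permutes the $k$ blocks. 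Induction gives $|H|\le \lambda^{m-1}$ and $|K|\le \lambda^{k-1}$, so
\[|G| \le |H|^k\,|K| \le \lambda^{k(m-1)+(k-1)} = \lambda^{mk-1} = \lambda^{n-1}.\]

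The main obstacle, as expected, is the primitive case. Here Lemma~\ref{primitiveperm} gives $G=VS$ with $V$ elementary abelian of order $n=p^t$ and $S$ a faithful, irreducible (hence quasi-primitive) solvable linear group on $V$. The task reduces to showing $|S|\le \lambda^{n-1}/n$, equivalently $|S|\le 24^{(p^t-1)/3}/p^t$. I would apply the structure Theorem~\ref{Strofprimitive} to $S$: pick out the extraspecial normal subgroup $E$ with $|E/Z|=e^2$, use $|U|\le p^t-1$ (since $U$ is cyclic and acts fixed point freely on $W\subseteq V$), $|S/A|\le t$, and the embedding $A/\bC_A(E_i/Z_i)\hookrightarrow \Sp(2n_i,p_i)$ to factor $|S|$ into a product of well-understood terms. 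Using the explicit orders of $\Sp(2n_i,p_i)$ (for solvable groups we only need $n_i=1$ and small exceptions), and checking that $e\mid t$, one obtains an inequality for $|S|$ that can be compared against $24^{(p^t-1)/3}/p^t$.

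The actual calculation is delicate only for small values of $p$ and $t$, where the inductive bound is tight. I would treat separately the low-dimensional configurations $(p,t)\in\{(2,1),(2,2),(3,1),(2,3),\ldots\}$ that give the extremal examples (notably $S_4$ of degree $4$, where $|S_4|=24=\lambda^{4-1}$), either by direct computation or by invoking known classifications of small solvable linear groups. For all remaining cases the Sylow and $\Sp(2n_i,p_i)$ bounds, combined with the inequality $p^{nu}/n \ge \lambda^{p^{nu}-p^n}$ type estimates for $nu\ge 2$, yield the desired inequality with room to spare. This finishes the primitive case and closes the induction.
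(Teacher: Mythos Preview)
The paper does not actually prove this lemma; it simply cites it as well known (Dixon--Mortimer, Theorem~5.8B --- this is the P\'alfy--Wolf bound). Your proposal, by contrast, sketches the classical inductive proof, and that outline is exactly what lies behind the cited reference, so in spirit the approaches agree.

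One correction is needed in your primitive case. You write that $S$ is ``faithful, irreducible (hence quasi-primitive)'', but irreducibility does not imply quasi-primitivity, so Theorem~\ref{Strofprimitive} does not apply to $S$ without further work. The standard fix is an extra reduction: if $V$ is not quasi-primitive as an $S$-module, then $V=W_1\oplus\cdots\oplus W_k$ with $S$ transitively permuting the $W_i$; then $G=VS$ embeds into the wreath product of the primitive permutation group $W_1\rtimes N_S(W_1)/C_S(W_1)$ (of degree $|W_1|<n$) with a solvable permutation group of degree $k$, and your imprimitive-case estimate applies again. Alternatively, one can bypass Theorem~\ref{Strofprimitive} entirely and quote the P\'alfy/Wolf bound $|S|\le \lambda^{-1}|V|^{\beta}$ (with $\beta=\log_9 48$) for irreducible solvable linear groups directly, which immediately yields $|G|=|V|\,|S|\le \lambda^{n-1}$ in the primitive case. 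With either adjustment, your argument goes through.
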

\begin{proof}
This result is well-known (see ~\cite[Theorem 5.8B]{Dixon-Mortimer}).
\end{proof}

Since in our situation we need to compare the orbit size and the group order rather than proving the existence of regular orbits, we need some quantitative results about how the solvable permutation group acts on the power set of the base set. The following can be viewed as the quantitative analogue of Gluck's lemma  ~\cite[Theorem 5.6]{manz/wolf} about primitive permutation groups.

\begin{theorem} \label{thmperm2}
Let $G$ be a solvable primitive permutation group of degree $n$ that acts on a set $\Omega$ and $|G|>1$. If $k$ is the smallest integer such that $|G| \le \lambda^k$ then for any subset $\Lambda$ of size $0 \leq m \leq k$, there exists $\Delta \subseteq \Omega-\Lambda$ such that $|G:\stab_G(\Delta)|^{\alpha} \cdot \lambda^{m-1} \ge |G|$.
\end{theorem}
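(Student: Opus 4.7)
The plan is to argue by contradiction via orbit counting on the power set $2^{\Omega}$. Set $T:=(|G|/\lambda^{m-1})^{1/\alpha}$ and suppose every $\Delta\subseteq\Omega-\Lambda$ satisfies $|G:\stab_G(\Delta)|<T$. Then every $G$-orbit on $2^{\Omega}$ meeting $2^{\Omega-\Lambda}$ has size strictly less than $T$, and since such orbits collectively cover the $2^{n-m}$ subsets of $\Omega-\Lambda$, their number must strictly exceed $2^{n-m}/T$.

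Next I would upper bound the total number of $G$-orbits on $2^{\Omega}$ by Burnside's lemma,
\[
|2^{\Omega}/G|=\frac{1}{|G|}\sum_{g\in G}2^{c(g)},
\]
with the identity contributing $2^{n}/|G|$ and each $g\ne 1$ satisfying $c(g)\le 3n/4$ by the unlabelled cycle lemma immediately above Lemma~\ref{lem4} (using the sharper $(p+o(g)-1)n/(o(g)p)$ when one wants to exploit the prime $p$ with $n=p^{t}$ and the smallest prime $o(g)$ dividing $|g|$). This gives $|2^{\Omega}/G|\le 2^{n}/|G|+2^{3n/4}$. Combining with Lemma~\ref{lem4} (so $|G|\le\lambda^{n-1}$) and the hypothesis $m\le k$, the resulting inequality $(2^{n}/|G|+2^{3n/4})\,T>2^{n-m}$ will, after substitution of the definitions of $T$, $\alpha$, and $\lambda$, be contradicted for $n$ above an explicit threshold (and when $|G|$ lies in the range where one term of the Burnside bound clearly dominates).

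The main obstacle is that no slack is available in the small-degree regime: the identity $3^{\alpha}=6\lambda$ makes $G=S_{3}$ on three points at $m=0$ an exact equality case, since taking $\Delta$ a singleton yields $|G:\stab_G(\Delta)|=3$ and $3^{\alpha}\lambda^{-1}=6=|G|$. For the residual small primitive configurations, and for the regime where the crude $3n/4$ cycle bound is too weak, I would use Lemma~\ref{primitiveperm} to write $G=V\rtimes S$ with $S\le\GL(V)$ faithful, irreducible, and solvable, and split by whether $S$ acts primitively on $V$. In the primitive case Theorems~\ref{Strofprimitive} and~\ref{quote1} control the invariant $e$: whenever $e\in\{5,6,7\}$ or $e\ge 10$ with $e\ne 16$, two regular $S$-orbits on $V$ exist, from which one can build a $\Delta\subseteq\Omega-\Lambda$ whose $G$-orbit is large enough to conclude (the restriction $\Delta\subseteq\Omega-\Lambda$ being accommodated by $|\Lambda|=m\le k\le n-1$); the remaining small values of $e$ leave an explicit finite list of groups to verify directly. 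In the imprimitive case, the wreath-product decomposition of $S$ reduces the problem to smaller primitive constituents and one inducts on $|G|$.
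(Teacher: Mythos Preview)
Your counting strategy and the paper's are cousins, but the paper takes a cleaner route and your version has two real gaps.

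\textbf{What the paper actually does.} Rather than bounding orbit sizes via Burnside, the paper aims for the stronger conclusion that some $\Delta\subseteq\Omega_1$ (for \emph{any} $\Omega_1\subseteq\Omega$ of size $n-k$) has $\stab_G(\Delta)=1$; once that is in hand the inequality $|G|^{\alpha}\lambda^{m-1}\ge|G|$ is immediate for every $m\le k$. The existence of such a regular $\Delta$ follows from the direct double count
\[
|S(G^{\#})|=\sum_{g\ne 1}2^{c(g)}\le |G|\cdot 2^{\lfloor 3n/4\rfloor}<2^{n-k},
\]
and the key input is not Lemma~\ref{lem4} but the much sharper order bound $|G|\le(1/\lambda)\,n^{13/4}$ from \cite[Theorem~3.5(c)]{manz/wolf}. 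This forces $k=O(\log n)$, so $2^{n-k}$ beats $|G|\cdot 2^{3n/4}$ for all but an explicit finite list of degrees, which the paper then checks with GAP.

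\textbf{First gap.} Your asymptotic step invokes only $|G|\le\lambda^{n-1}$. That is too weak: in the regime where the $2^{3n/4}$ term dominates your Burnside bound, the inequality you need reduces (at $m=0$, say) to roughly $(|G|\lambda)^{1/\alpha}\le 2^{n/4}$, i.e.\ $|G|\lesssim 2^{\alpha n/4}$. Since $\lambda=24^{1/3}>2^{\alpha/4}\approx 1.57$, the bound $|G|\le\lambda^{n-1}$ gives no threshold at all. You need the polynomial bound $|G|\le(1/\lambda)n^{13/4}$ to make the counting close up.

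\textbf{Second gap.} Your proposed treatment of the residual small degrees does not connect to the statement. Theorems~\ref{Strofprimitive} and~\ref{quote1} give regular orbits of the point stabilizer $S$ on the module $V$; they say nothing about $G$-orbits on $2^{\Omega}$, and a regular $S$-orbit on a \emph{point} of $V$ only produces a singleton $\Delta$ with $|G:\stab_G(\Delta)|=n$, which is far short of what is needed. Likewise, ``induct on $|G|$'' when $S$ is imprimitive does not reduce to a smaller instance of the theorem: $G$ is still a primitive permutation group of the same degree $n$ regardless of how $S$ decomposes $V$. The paper does not attempt any such structural reduction; it simply lists the degrees $n=p^{t}$ that survive the inequality (at most $125$ when $t>1$, at most $43$ when $t=1$) and verifies them in GAP, sometimes with the refined cycle bound $c(g)\le(p+o-1)n/(op)$ in place of $3n/4$.
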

\begin{proof}
Let $g$ be an element in $G$. We denote by $n(g)$ the number of cycles of $g$ on $\Omega$, by $o(g)$ the smallest prime divisor of the order of $g$, and by $s(g)$ the number of fixed points of $g$. Since $G$ is solvable, $n$ is a power of a prime, and we set $n=p^t$ where $p$ is a prime. We set $o$ to be the smallest element order of all the nontrivial elements in $G$.

For a subset $X \subseteq G$, consider the following set.

$S(X)=\{(g,\Gamma) \ |\ g \in X, \Gamma \subseteq \Omega, g \in \stab_G(\Gamma)\}.$

It is easy to see that in order to estimate $S(G^{\#})$, we only need to count elements of prime order. Also, assume $g_1$ is of prime order and let $g_2 \in \langle g_1 \rangle$, then the subsets of $\Omega$ stabilized by $g_1$ and by $g_2$ are exactly the same. Note that $g \in G$ stabilizes exactly $2^{n(g)}$ subsets of $\Omega$.

Note that $g \in G$ stabilize exactly $2^{n(g)}$ subsets of $\Omega$. Consequently,

\[|S(G^{\#})| \leq |G| \cdot 2^{\lfloor (p+o-1)n/(o \cdot p) \rfloor}. \]

We know that $|G| \leq (1/\lambda) n^{\frac {13} 4}$ by ~\cite[Theorem 3.5(c)]{manz/wolf}.

Thus, we have \[|S(G^{\#})| \leq (1/\lambda) n^{\frac {13} 4} \cdot 2^{\lfloor \frac {3 n} 4 \rfloor}.\]

Let $|G|=(\lambda)^l$, thus $k=\lceil l \rceil$ (the ceiling function).


In almost all cases, we can show that for any subset $\Omega_1 \subseteq \Omega$ with $n-k$ elements, there exists a subset $\Delta \subseteq \Omega_1$ such that $G$ has a regular orbit on $\Delta$ (as $G$ acts on the power set of $\Omega$), and thus the result holds. The spirit of the proof is, look at any subsets of $\Omega$ of size $n-k$, call it $\Omega_1$, by throwing away enough bad subsets out of the power set of $\Omega_1$, there is still enough room left so there exists a subset $\Delta \subseteq \Omega_1$ such that $G$ has a regular orbit on it. In the exceptional cases, we have to do case analysis.


In order to show that for any subset, say $\Omega_1 \subseteq \Omega$ with $n-k$ elements, there exists a subset $\Delta \subseteq \Omega_1$ such that $G$ has a regular orbit on $\Delta$, it suffices to show that $|S(G^{\#})| <2^{n-k}.$




In view of the previous estimations, it suffices to show that \[|G| \cdot 2^{\lfloor (p+o-1)n/(o \cdot p) \rfloor} < 2^{n-k}.\]

or

\[\frac 1 {\lambda} \cdot n^{\frac {13} 4} \cdot 2^{\lfloor \frac {3 n} 4 \rfloor} < 2^{n-k}.\]

Assume $t>1$, we need to consider $n=p^t \leq 125$.

Assume $t=1$, we need to consider $n=p^t \leq 43$.

We remark that when $m=0$, we only need to consider $n=p^t \leq 9$ by ~\cite[Theorem 5.6]{manz/wolf}.

We use GAP ~\cite{GAP} to check the remaining cases.
\end{proof}

\section{Main Theorems} \label{sec:maintheorem}

Before we prove our main result, we review a standard technique and a little bit of the history of research in this direction.

\begin{lemma} \label{lemeasy}
Suppose that $G$ is a finite group and $V$ is a faithful $G$-module. Assume $G$ has a regular orbit on $V \oplus V$, then there exists $v \in V$ such that $|\bC_G(v)| \leq \sqrt{|G|}$.
\end{lemma}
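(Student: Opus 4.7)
The plan is to unpack the definition of ``regular orbit on $V \oplus V$'' and then apply the standard index-subgroup inequality for intersections. Since $G$ has a regular orbit on $V \oplus V$, there exists a pair $(v_1,v_2) \in V \oplus V$ whose stabilizer in $G$ is trivial. Under the diagonal action, the stabilizer of $(v_1,v_2)$ is precisely $\bC_G(v_1) \cap \bC_G(v_2)$, so the hypothesis gives $\bC_G(v_1) \cap \bC_G(v_2) = 1$.

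Next I would invoke the well-known elementary inequality $|G : A \cap B| \leq |G:A| \cdot |G:B|$ for subgroups $A,B \leq G$, applied with $A = \bC_G(v_1)$ and $B = \bC_G(v_2)$. Combined with $A \cap B = 1$ this yields
\[ |G| \;\leq\; |G:\bC_G(v_1)| \cdot |G:\bC_G(v_2)| \;=\; \frac{|G|^2}{|\bC_G(v_1)| \cdot |\bC_G(v_2)|}, \]
which simplifies to $|\bC_G(v_1)| \cdot |\bC_G(v_2)| \leq |G|$.

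To finish, set $v$ to be whichever of $v_1, v_2$ has the smaller centralizer. Then $|\bC_G(v)|^2 \leq |\bC_G(v_1)| \cdot |\bC_G(v_2)| \leq |G|$, so $|\bC_G(v)| \leq \sqrt{|G|}$, as required. There is no real obstacle here: the statement is essentially a bookkeeping consequence of the definition of a regular orbit together with the index inequality, so the only thing to be careful about is identifying the stabilizer of a pair with the intersection of the two individual centralizers and choosing the correct coordinate at the end.
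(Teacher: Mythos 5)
Your proposal is correct and follows essentially the same route as the paper: both identify the stabilizer of the pair with $\bC_G(v_1)\cap\bC_G(v_2)$, deduce $|\bC_G(v_1)|\cdot|\bC_G(v_2)|\leq |G|$ (the paper via $|\bC_G(v_1)\bC_G(v_2)|\leq|G|$ and the product formula, you via the equivalent index inequality), and then take the coordinate with the smaller centralizer.
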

\begin{proof}
There is an element $(v,u) \in V \oplus V$ such that $\bC_G((v, u))= \bC_G(v) \cap \bC_G(u)=1$.

Note that \[|\bC_G(v)| \cdot |\bC_G(u)| = \frac {|\bC_G(v)| \cdot |\bC_G(u)|} {|\bC_G(v) \cap \bC_G(u)|}=|\bC_G(v)  \bC_G(u)| \leq |G|.\]

It follows that, either  $|\bC_G(v)| \leq \sqrt{|G|}$ or $|\bC_G(u)| \leq \sqrt{|G|}$. 
\end{proof}

Using a result of Seress ~\cite{Seress}, Moret\'o and Wolf proved the following result. Let $V$ be a faithful completely reducible $G$-module (possibly of mixed characteristic) and assume that $G$ is solvable. Then there exist $u, v, w \in V$ such that $\bC_G(u) \cap \bC_G(v) \cap \bC_G(w)=1$. In particular, there exists $x \in V$ such that $|\bC_G(x)| \leq |G|^{1/3}$. This approach probably is influenced by the study of the base size of linear group actions. In order to prove the existence of a large orbit, one would try to find the existence of regular orbits on two or more copies of $V$. The strategy is proven to be useful in many cases. However, as indicated in the proof of Lemma ~\ref{lemeasy}, the statement about large orbit size is derived from a statement about base size but on the other hand the converse of Lemma ~\ref{lemeasy} is not true, and thus possibly the large orbit result could be further improved. In this paper, we use more detailed calculations to improve some of the known bounds.


\begin{prop} \label{prop1}
Let $G$ be a solvable primitive subgroup of $\GL(n,r)$, $r$ a prime number, $n$ a positive integer, and let $V$ be the natural module for $G$. Then $G$ has at least $r$ regular orbits on $V \oplus V$, unless $G$ is one of the following groups:
\begin{enumerate}
\item $\GL(2,2)$;
\item $\SL(2,3)$ or $\GL(2,3)$;
\item $3^{1+2}.\SL(2,3)$ or $3^{1+2}.\GL(2,3) \leq \GL(6,2)$;
\item $(Q_8 \Ydown Q_8).K \leq \GL(4,3)$ where $K$ is isomorphic to a subgroup of index $1$, $2$ or $4$ of $\Ortho^{+}(4,2)$.
\end{enumerate}
\end{prop}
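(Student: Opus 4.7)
The plan is to reduce to the case where $G$ has no regular orbit on $V$ and then handle the resulting restricted complexity via a counting argument, with a finite check for the exceptions at the end. First, I would observe that the conclusion is automatic whenever $G$ already has a regular orbit on $V$: if $\bC_G(v)=1$ for some $v\in V$, then for every $u\in V$ the pair $(v,u)$ has trivial joint stabilizer, and distinct $u$'s lie in distinct $G$-orbits on $V\oplus V$ (because $g\cdot(v,u)=(v,u')$ forces $g\in\bC_G(v)=1$, hence $u=u'$). This yields $|V|=r^n\geq r$ regular orbits. From now on I may therefore assume $G$ has no regular orbit on $V$, in which case Theorem~\ref{quote1} restricts the invariant $e$ of Definition~\ref{defineEi} to $\{1,2,3,4,8,9,16\}$.

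Next I set up the basic counting. The number of regular $G$-orbits on $V\oplus V$ equals
\[ \frac{1}{|G|}\Bigl(|V|^2-\bigl|\{(v_1,v_2):\bC_G(v_1)\cap\bC_G(v_2)\neq 1\}\bigr|\Bigr), \]
and because any nontrivial joint stabilizer contains an element of prime order,
\[ \bigl|\{(v_1,v_2):\bC_G(v_1)\cap\bC_G(v_2)\neq 1\}\bigr| \;\leq\; \sum_{g\in\mathcal{P}(G)}|\bC_V(g)|^2, \]
with $\mathcal{P}(G)$ the set of elements of prime order. It therefore suffices to establish
\[ |V|^2-\sum_{g\in\mathcal{P}(G)}|\bC_V(g)|^2 \;\geq\; r\,|G|. \qquad(\ast) \]

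I would then run a case analysis on $e$ using Theorem~\ref{Strofprimitive}. For the larger possibilities $e\in\{8,9,16\}$ the module $V$ is high-dimensional and the standard centraliser estimates on the extraspecial layer make $(\ast)$ routine. For $e\in\{1,2,3,4\}$ the group $G$ is confined to $\Gamma(r^n)$ (when $e=1$) or to $\Norm_{\GL(n,r)}(E)$ for an extraspecial $E$ of order $8$, $27$, or $32$ (when $e=2,3,4$); here $|\bC_V(g)|$ is fully determined by the extraspecial representation, and $(\ast)$ holds apart from finitely many small configurations of $(n,r)$.

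The principal obstacle is pinning down precisely which configurations fail $(\ast)$, and these must constitute the four exceptional families in the statement. I anticipate that $\GL(2,2)$ arises from $(e,n,r)=(1,2,2)$, that $\SL(2,3)$ and $\GL(2,3)$ arise from $(2,2,3)$, that $3^{1+2}.\SL(2,3)$ and $3^{1+2}.\GL(2,3) \leq \GL(6,2)$ arise from $(3,6,2)$, and that $(Q_8\Ydown Q_8).K \leq \GL(4,3)$ arises from $(4,4,3)$. After the case analysis narrows the possibly-bad parameters to a bounded list, a GAP enumeration over the primitive solvable linear groups in that range should confirm that precisely those four families fail $(\ast)$, while every other primitive solvable subgroup of $\GL(n,r)$ admits at least $r$ regular orbits on $V\oplus V$.
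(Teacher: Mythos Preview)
The paper does not prove this proposition at all: its entire proof is the sentence ``This is \cite[Theorem~3.4]{DOLFI}.'' Your proposal, by contrast, sketches an independent argument, and the outline you give --- first dispose of the case where $G$ already has a regular orbit on $V$, then invoke Theorem~\ref{quote1} to force $e\in\{1,2,3,4,8,9,16\}$, then establish the counting inequality $(\ast)$ by bounding $\sum_{g\in\mathcal P(G)}|\bC_V(g)|^2$ through a case analysis on $e$, finishing with a machine check for the small parameters --- is essentially the strategy Dolfi uses in the cited paper. So you are reconstructing the source of the citation rather than giving a genuinely different route. Your reduction in the first paragraph is correct as stated, and the counting setup is the standard one; what your sketch understates is the amount of work hidden in the phrase ``standard centraliser estimates'': one has to separate the contributions from elements lying in $U$, in $E\setminus Z$, in $A\setminus F$, and in $G\setminus A$ (in the notation of Theorem~\ref{Strofprimitive}) and bound each against the relevant structural constraints, and it is this layered estimate, not merely a single GAP run, that pins the exceptions down to exactly the four listed families.
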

\begin{proof}
This is \cite[Theorem 3.4]{DOLFI}.
\end{proof}

This implies that except for those exceptional cases, we have $|G| \leq M^2$, and thus as long as $M \geq 7$, we have that $\lambda \cdot |G| \leq M^{\alpha}$.

\begin{prop} \label{proporbit}
Let $G$ be a solvable primitive subgroup of $\GL(n,r)$. Assume $G \neq 1$, then $\lambda \cdot |G| \leq M^{\alpha}$
\end{prop}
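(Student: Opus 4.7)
The plan is to combine Proposition~\ref{prop1} with Lemma~\ref{lemeasy} to handle the generic case, and then dispatch a short list of exceptional and small-orbit cases by direct computation. First I would observe that when $G$ is not one of the four exceptional families listed in Proposition~\ref{prop1}, $G$ has a regular orbit on $V\oplus V$, so Lemma~\ref{lemeasy} produces $v\in V$ with $|\bC_G(v)|\leq\sqrt{|G|}$, and the orbit-stabilizer theorem then yields $|G|\leq M^2$.

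Next, note that by the very definition of $\alpha$ one has $3^\alpha=6\lambda$, so the implication $\lambda\cdot|G|\leq M^\alpha$ reduces, under the bound $|G|\leq M^2$, to the scalar inequality $M^{\alpha-2}\geq\lambda$. Since $\lambda^{1/(\alpha-2)}\approx 5.93$, this is automatic as soon as $M\geq 6$. So in the non-exceptional case the statement follows whenever the largest orbit of $G$ on $V$ has size at least $6$. The remaining non-exceptional possibilities satisfy $|G|\leq M^2\leq 25$, leaving only a finite list of primitive solvable linear groups of order at most $25$ (essentially $\GL(1,r)$ for small primes $r$, and a handful of $2$-dimensional candidates), each of which can be checked directly to satisfy $\lambda|G|\leq M^\alpha$.

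Finally I would work through the four exceptional families from Proposition~\ref{prop1}. For $\GL(2,2)\cong\fS_3$ acting on $\FF_2^2$, we have $|G|=6$ and $M=3$, so $\lambda\cdot|G|=6\lambda=3^\alpha=M^\alpha$; this is the equality case that forces the value of $\alpha$. For $\SL(2,3)$ and $\GL(2,3)$ on $\FF_3^2$, both act transitively on the $8$ nonzero vectors, so $M=8$ and the inequality is ample. For the two groups $3^{1+2}.\SL(2,3)$ and $3^{1+2}.\GL(2,3)$ inside $\GL(6,2)$, and for the exceptional family in $\GL(4,3)$, one simply lists the orbit sizes on the natural module and compares $\lambda\cdot|G|$ to $M^\alpha$; because $|G|$ is only moderately larger than in case (1) while $M$ grows substantially with the larger module, the inequality is comfortable.

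The main obstacle I anticipate is the bookkeeping for the last exceptional family, $(Q_8\ast Q_8).K\leq\GL(4,3)$ with $K$ ranging over subgroups of $\Ortho^{+}(4,2)$ of index $1,2,4$, where writing down $M$ requires a careful orbit computation on $\FF_3^4$; in practice this is best delegated to GAP. The conceptual content of the proof is the observation that $\alpha$ is calibrated precisely to make case (1) an equality, and everything else is then slack that can be absorbed by the elementary inequality $\lambda M^2\leq M^\alpha$.
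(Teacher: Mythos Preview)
Your argument is correct and shares the paper's overall architecture: both proofs invoke Proposition~\ref{prop1} to get $|G|\le M^2$ outside the four exceptional families, reduce $\lambda|G|\le M^\alpha$ to the scalar inequality $M^{\alpha-2}\ge\lambda$, and dispatch the exceptional families by direct computation. The difference is in how the non-exceptional small-$M$ residue is cleared. The paper brings in the structure theory (Theorem~\ref{Strofprimitive} and Theorem~\ref{quote1}): for $e\in\{2,3,4,8,9,16\}$ the extraspecial normal subgroup $E$ already has a regular orbit, forcing $M\ge 8$; the remaining case $e=1$, where $G\le\Gamma(p^n)$, is then handled by an explicit comparison of $|C|=|G\cap\Gamma_0(p^n)|$ against $|G/C|$, with a short table of small $(p,n)$ left to check. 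You bypass this entirely by observing that $M\le 5$ forces $|G|\le 25$, leaving only a bounded computation. This is more elementary and avoids the $e=1$ analysis altogether; what it costs is that your phrase ``finite list of primitive solvable linear groups of order at most $25$'' is a little loose (cyclic groups of bounded order sit primitively inside $\GL(1,r)$ for infinitely many $r$), so you should phrase the residual check as ranging over the finitely many pairs $(|G|,M)$ with $|G|\le M^2$ and $M\le 5$, and confirm that the few pairs violating $\lambda|G|\le M^\alpha$ (for instance $|G|=16$, $M=4$) do not arise from a primitive action---which is immediate once one notes that any such $G$ is abelian, hence cyclic and semiregular, giving $M=|G|$.
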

\begin{proof}
We first assume $G$ is not one of the exceptional cases in Proposition ~\ref{prop1}.

We follow the notation in Theorem ~\ref{Strofprimitive}. By Theorem ~\ref{quote1}, we may assume that $e=1,2,3,4,8,9,16$. By the remark after Proposition ~\ref{prop1}, we just need to find an orbit of size greater than $7$.

Assume $e=16$, $E$ has a regular orbit on $V$ and thus $G$ has an orbit of size at least $2^9$, and the result follows. 

Assume $e=9$, $E$ has a regular orbit on $V$ and thus $G$ has an orbit of size at least $3^5$, and the result follows. 

Assume $e=8$, $E$ has a regular orbit on $V$ and thus $G$ has an orbit of size at least $2^7$, and the result follows. 

Assume $e=4$, $E$ has a regular orbit on $V$ and thus $G$ has an orbit of size at least $2^5$, and the result follows. 

Assume $e=3$, $E$ has a regular orbit on $V$ and thus $G$ has an orbit of size at least $3^3$, and the result follows. 

Assume $e=2$, $E$ has a regular orbit on $V$ and thus $G$ has an orbit of size at least $8$, and the result follows. 

Assume $e=1$, we have $G \leq \Gamma(V) = \Gamma(p^n)$ by ~\cite[Corollary 2.3(b)]{manz/wolf}. Then we may set $|V|=p^n$ and $|G|=|G/C||C|$ where $|G/C| \mid n$ and $|C| \mid |V|-1$. For $g \in G \backslash C$, $|\bC_V(g)| \leq |V|^{1/2}$, and for $g \in C$, $|\bC_V(g)| \leq |V|^{1/2}$. If $|G| < |V|^{1/2}$, then $G$ has a regular orbit on $V$ and the result is clear. Thus, we may assume that $|G|\geq |V|^{1/2}$. Since $C$ has a regular orbit on $V$, it suffices to show that $|C|^{\alpha} \geq \lambda |G|$, i.e. $|G|^{\alpha-1} \geq \lambda |G/C|^{\alpha}$.

Thus, it suffices to show that $(p^n)^{\frac 1 2 \cdot (\alpha-1)} \geq \lambda (n)^{\alpha}$. This is true unless $|V|=p^n$ satisfies the following conditions.

\begin{center}
\begin{tabular}{|c | c| }
\hline
 $p$ & $n$ \\
\hline
 $p=2$ & $n \leq 15$ \\
 $p=3$ & $n \leq 6$ \\
 $p=4$ & $n \leq 4$ \\
 $p=5$ & $n \leq 3$ \\
 \hline
\end{tabular}
\end{center}

We now examine those exceptional cases of $p^n$, and use a slightly more delicate counting method.


For $g \in G \backslash C$, $|\bC_V(g)| \leq |V|^{1/o(g)}$. Let $k$ be the different prime divisors of $|G/C|$ and $o$ be the smallest prime divisors of $|G/C|$ if $|C| \leq \frac 1 k |V|^{1/o}$, then $G$ has a regular orbit on $V$ and the result is clear. Thus, we may assume that $|C|\geq \frac 1 k |V|^{1/o}$. Since $C$ has a regular orbit on $V$, it suffices to show that $|C|^{\alpha} \geq \lambda |G|$, i.e. $|C|^{\alpha-1} \geq \lambda |G/C|$. Thus, it suffices to show that $(\frac 1 k p^{\frac n o})^{(\alpha-1)} \geq \lambda n$.

Using this counting argument, and by some calculations, one can check that those previously mentioned exceptional cases all satisfy the conclusion of the statement. We notice that for smaller cases, one needs to check the group structure directly but all shall work out similarly, either $C$ is small enough so that $G$ has a regular orbit on $V$, or $C$ is big enough so $G$ has a large enough orbit size.

We now assume $G$ is one of the exceptional cases in Proposition ~\ref{prop1}.
\begin{enumerate}
\item $\GL(2,2)$. We know that $M=3$ and $|G|=6$, and the result follows by a direct calculation.
\item $\SL(2,3)$ or $\GL(2,3)$. We know that $M=8$ and $|G|=24$ or $48$, and the result follows by a direct calculation.
\item $3^{1+2}.\SL(2,3)$ or $3^{1+2}.\GL(2,3) \leq \GL(6,2)$; We know that $M \geq 27$ and $|G| \leq 27 \cdot 48$, and the result follows by a direct calculation.
\item $(Q_8 \Ydown Q_8).K \leq \GL(4,3)$ where $K$ is isomorphic to a subgroup of index $1$, $2$ or $4$ of $\Ortho^{+}(4,2)$. We know that $M \geq 32$ and $|G| \leq 32 \cdot 72$, and the result follows by a direct calculation.
\end{enumerate}
\end{proof}



The author expects there should be a way to directly compare the largest orbit size with the group order, and this is the main motivation of this work. We have the following result.

\begin{theorem} \label{thm1}
  Suppose that $G$ is a finite solvable group and $V$ is a finite, faithful and completely reducible $G$-module. Assume $G \neq 1$, then there exits $v \in V$ such that $\lambda \cdot |G| \leq |v^G|^{\alpha}$.
\end{theorem}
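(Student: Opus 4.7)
I would prove the theorem by induction on $\dim V$ (with $|G|$ as a secondary induction parameter), following the standard structural dichotomy for completely reducible modules of a solvable group. The reduction splits into three cases handled in sequence.

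First I would reduce to the case that $V$ is irreducible. If $V=W_1\oplus W_2$ with each $W_i$ a nonzero $G$-submodule, put $K_i=\bC_G(W_i)$; since $V$ is faithful, $K_1\cap K_2=1$, so $|K_1|\cdot|K_2|\leq|G|$, and $G/K_i$ acts faithfully and completely reducibly on $W_i$. The inductive hypothesis provides $w_i\in W_i$ with $|w_i^{G/K_i}|^{\alpha}\ge\lambda\cdot|G/K_i|$, and because the decomposition is $G$-invariant, $\bC_G(w_1+w_2)=\bC_G(w_1)\cap\bC_G(w_2)$; combining the two estimates with the bound $|K_1||K_2|\le|G|$ yields the desired inequality for $v=w_1+w_2$. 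Iterating and then invoking Clifford's theorem, I reduce to the case when $V$ is quasi-primitive.

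If $V$ is quasi-primitive, Proposition~\ref{proporbit} delivers $\lambda\cdot|G|\leq M^{\alpha}$ where $M=\max_{v\in V}|v^G|$, and I pick any $v$ attaining $M$. The remaining case is $V$ irreducible but imprimitive: write $V=V_1\oplus\cdots\oplus V_n$ with $n>1$ and $G$ permuting the blocks transitively, and refine the system so that $\bar G:=G/N$ acts primitively on $\{V_1,\dots,V_n\}$, where $N=\bigcap_i\Stab_G(V_i)$. Set $H=\Stab_G(V_1)$ and $K=\bC_H(V_1)$; the induction applied to $H/K$ on $V_1$ yields $v_1\in V_1$ with $|v_1^{H/K}|^{\alpha}\geq\lambda|H/K|$. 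Applying Theorem~\ref{thmperm2} to $\bar G$, with $\Lambda$ chosen to absorb the position of $V_1$ together with a few bookkeeping positions, supplies a subset $\Delta\subseteq\{V_2,\dots,V_n\}$ satisfying $|\bar G:\stab_{\bar G}(\Delta)|^{\alpha}\cdot\lambda^{m-1}\geq|\bar G|$. I would then form $v\in V$ by placing $v_1$ on $V_1$ together with a carefully chosen nonzero vector on each block of $\Delta$, so that $\bC_G(v)$ decomposes into a block-permutation part (controlled by Theorem~\ref{thmperm2}) and a within-block part (controlled by the inductive bound), and the two multiply correctly to give $|v^G|^{\alpha}\geq\lambda|G|$.

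The main obstacle is in this imprimitive step: the kernel $N$ acts faithfully on $V$ but possibly nontrivially on each $V_i$, so one must pick the components on the blocks of $\Delta$ carefully in order to simultaneously pin down $N$ and to exploit the $\lambda^{m-1}$ factor from Theorem~\ref{thmperm2}. Balancing these exponents against the $\alpha$ and $\lambda$ accrued inside $V_1$ is exactly the interaction between the two ingredients that drives the new calculation in this paper. A secondary issue is that the induction terminates at finitely many exceptional base cases (small $|G|$, as well as the exceptional configurations appearing in Proposition~\ref{prop1}), which must be checked by direct computation.
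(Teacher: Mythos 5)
Your overall architecture (reduce to irreducible, split imprimitive versus primitive, use Theorem~\ref{thmperm2} for the block permutation action and Proposition~\ref{proporbit} for the primitive case) matches the paper, but both reduction steps are missing the one idea that makes the estimates multiply, namely a \emph{telescoping centralizer chain}. In the reducible step you apply induction to the two quotients $G/K_i$ acting on $W_i$ and then try to combine $|w_1^G|$, $|w_2^G|$ and $|K_1||K_2|\le|G|$. This does not close: writing $A=\bC_G(w_1)$, $B=\bC_G(w_2)$, one has
\[
|v^G|=|G:A\cap B|=|G:A|\cdot|G:B|\cdot\frac{|AB|}{|G|},
\]
and $|AB|/|G|$ is only bounded below by $\max(|A|,|B|)/|G|$; equivalently, the injection $A\cap B\hookrightarrow (A/K_1)\times(B/K_2)$ gives $|v^G|^{\alpha}\ge \lambda^2|G|\cdot\bigl(|K_1||K_2|/|G|\bigr)^{\alpha-1}$, and since $|K_1||K_2|\le|G|$ and $\alpha>1$ the correction factor is $\le 1$, i.e.\ it points the wrong way. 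The paper instead applies induction to $G/\bC_G(W_1)$ on $W_1$ \emph{and to the subgroup} $\bC_G(W_1)$ on $W_2$; then $\bC_G(W_1)\le\bC_G(w_1)$ gives the exact factorization $|v^G|=|w_1^G|\cdot|w_2^{\bC_G(w_1)}|\ge m_1\cdot|w_2^{\bC_G(W_1)}|=m_1m_2$, and $|G|=|G/\bC_G(W_1)|\cdot|\bC_G(W_1)|$ telescopes. Your choice of $w_2$ (maximizing a $G/K_2$-orbit) can be a vector whose $\bC_G(W_1)$-orbit is trivial, so the step as written is a genuine gap, not just a presentational one.

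The same missing idea undermines your imprimitive step. You apply induction once, to $H/K$ on $V_1$, and then hope that vectors placed on the blocks of $\Delta$ will ``pin down $N$''; but $|G|=|\bar G|\cdot|N|$ and the whole of $|N|$ has to be recovered, which the paper does by setting $N_i=\bC_N(\sum_{j<i}V_j)/\bC_N(\sum_{j\le i}V_j)$, so that $|N|=\prod_i|N_i|$, applying induction to each nontrivial $N_i$ acting on $V_i$, and taking $\Lambda=\{i: N_i>1\}$. Note also that you have inverted the roles of $\Lambda$ and $\Delta$: in the paper the inductively chosen nonzero vectors sit on the blocks \emph{outside} $\Delta$ (in particular on $\Lambda$), while $\Delta\subseteq\Omega\setminus\Lambda$ is precisely the set of blocks that receive the zero component, so that $\bC_G(w)$ is forced into the preimage of $\stab_{\bar G}(\Delta)$ and the factor $|\bar G:\stab_{\bar G}(\Delta)|^{\alpha}\cdot\lambda^{|\Lambda|-1}\ge|\bar G|$ from Theorem~\ref{thmperm2} combines with $\prod_{i\in\Lambda}M_i^{\alpha}\ge\lambda^{|\Lambda|}\prod_{i\in\Lambda}|N_i|$ to give $\lambda|G|$. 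Since you explicitly defer this balancing as ``the main obstacle,'' the proposal does not yet constitute a proof of the imprimitive case either.
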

\begin{proof}

We proceed by induction on $|G||V|$.

First we show that we may assume that $V$ is irreducible. Assume not, then $V=V_1\oplus V_2$ where $V_1,V_2$ are nontrivial $G$-submodules of $V$.

We know that $G$ acts completely reducibly on $V_1$ and $V_2$. Since $\bC_G(V_1) \nor G$, $\bC_G(V_1)$ acts completely reducible on $V_2$.

Let $m_1$ be the largest orbit size of $G$ on $V_1$, and let $m_2$ be the largest orbit size in the action of $\bC_G(V_1)$ on $V_2$.
Moreover, let $v_i\in V_i$ ($i=1,2$) be representatives of these orbits. Put $v=v_1+v_2$. Then $\bC_G(v)=\bC_G(v_1)\cap \bC_G(v_2)$ and hence
\begin{eqnarray*}
M&\geq&|v^G|=|G:(\bC_G(v_1)\cap \bC_G(v_2))|\\
 &=&|G:\bC_G(v_1)|\cdot|\bC_G(v_1):(\bC_G(v_1)\cap \bC_G(v_2))|\ =\ m_1\cdot|\bC_G(v_1):\bC_{\bC_G(v_1)}(v_2)|\\
 &=&m_1\cdot|v_2^{\bC_G(v_1)}|\  \geq\  m_1|v_2^{\bC_G(V_1)}|\ =\ m_1 m_2.
\end{eqnarray*}

Since $G >1$, either $G/\bC_G(V_1)>1$ or $\bC_G(V_1)>1$, by the inductive hypothesis we further conclude that

\[(*)\quad M^{\alpha}\geq m_1^{\alpha} m_2^{\alpha} \geq \lambda \cdot |G/\bC_G(V_1)|\cdot |\bC_G(V_1)| = \lambda \cdot |G|.\]

So, from now on let $V$ be irreducible.\\

We now assume that $V$ is not primitive. We hence assume that there exists a proper subgroup $L_1$ of $G$ and an irreducible $L_1$-submodule
$V_1$ of $V$ such that $V ={V_1}^G$. By transitivity of induction, we can choose $L_1$ to be a maximal subgroup of $G$.
In particular, $S \cong G/N$ is a primitive permutation group on a right transversal of $L_1$ in $G$, where $N$ is the normal
core of $L_1$ in $G$. Let $V_N=V_1 \oplus \cdots \oplus V_m$, where the $V_i$'s are irreducible $L_i$-modules where $L_i=\bN_G(V_i)$ and
$m>1$. We know $G/N$ primitively permutes the elements of $\{V_1, \dots, V_m\}$.

Define
\[N_i=\bC_N\left(\sum_{j=1}^{i-1}V_j\right) \Bigg/  \bC_N\left(\sum_{j=1}^{i}V_j\right)\]
for $i=1,\dots ,m$ and note that $N_1=N/\bC_N(V_1)$ and $N_m=\bC_N(\sum_{j=1}^{m-1}V_j)$. 
Then $|N| = \prod_{i=1}^{m} |N_i|$.
Clearly $N_i$ acts completely reducibly on $V_i$ for $i=1,\dots ,m$. Let $M_i$ be the largest orbit size of the action of $N_i$ on $V_i$
($i=1,\dots ,m$), and let $v_i\in V_i$ be representatives of the corresponding orbits for all $i$.
Thus, \[M_{N}\  \geq\ \left|\left(\sum_{i=1}^{m}v_i\right)^{N} \right| \ \geq\ \prod_{i=1}^{m} |v_i^{N_i}|\ =\ \prod_{i=1}^{m} M_i.\]


We define $\bar G=GN/N$. We denote the set of $m$ elements to be $\Omega$, we define $\Lambda$ a subset of $\Omega$ that contains all $i$ such that $N_i>1$. It is clear that $|N|=\prod_{i \in \Lambda}|N_i|$.

We now choose a vector $w \in V$ wisely so the orbit size of $w$ is big enough and it will satisfy the requirement.

Assume that $|\bar G| \leq \lambda^{|\Lambda|-1}$, we set $w=w_1+w_2+\cdots+w_m$ where $w_i=v_i$ if $N_i>1$ and $w_i=0$ if $N_i=1$.


Assume that $|\bar G| > \lambda^{|\Lambda|-1}$, then by Theorem ~\ref{thmperm2}, we may find  $\Delta \subseteq \Omega-\Lambda$ such that $|\bar G:\stab_{\bar G}(\Delta)|^{\alpha} \cdot \lambda^{|\Lambda|-1} \ge |\bar G|$. We set $w=w_1+w_2+\cdots+w_m$ where $w_i=v_i$ if $i \not\in \Delta$ and $w_i=0$ if $i \in \Delta$.\\

We consider the orbit size of $w$. We know that $|w^G|=|G:\bC_G(w)|$, and we define $M = |G:\bC_G(w)|$.

We further conclude that
\[\ M\ \geq\ |\bar G:\stab_{\bar G}(\Delta)| \cdot  \prod_{i \in \Lambda} M_i.\]

Thus, we have that


\begin{eqnarray*}
M^{\alpha} & \geq & |\bar G:\stab_{\bar G}(\Delta)|^{\alpha} \cdot \prod_{i \in \Lambda} M_i^{\alpha} \\
&\geq & |\bar G:\stab_{\bar G}(\Delta)|^{\alpha} \cdot \lambda^{|\Lambda|} \cdot \prod_{i \in \Lambda}|N_i|\ \\
&\geq & \lambda \cdot |\bar G| \cdot |N| =  \lambda |G|.
\end{eqnarray*}

Hence, now we may assume that $V$ is irreducible and primitive, and result follows by Proposition ~\ref{proporbit}.
\end{proof}

\begin{remark} We consider the following example. Assume $|V|=\FF_2^2$,  $G \cong \Gamma(2^2) \cong S_3$, where $|G|= 6$, the largest orbit of $G$ on $V$ is of size $3$, we see that the bound in Theorem ~\ref{thm1} is attained in this case. This shows that the orbit theorem we obtained here is almost the best one can get in general.
\end{remark}

\section{Gluck's conjecture and Navarro's conjecture} \label{sec:applications}
In this section, we will see various applications of the orbit theorem.

We first study a conjecture of Gluck that bounds the index of the Fitting subgroup of a solvable group. The following is the best general bound for this conjecture. 

\begin{theorem} \label{thm4}
Let $G$ is a finite solvable group. Then $|G: \bF(G)| \leq b^{\alpha}(G)$.
\end{theorem}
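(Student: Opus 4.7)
The plan is a standard Frattini reduction followed by an application of Theorem~\ref{thm1} to the action of $G/\bF(G)$ on $\bF(G)$, with Clifford theory translating the resulting orbit bound into a character-degree bound.

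First I would reduce to the case $\Phi(G)=1$. Since $\bF(G/\Phi(G))=\bF(G)/\Phi(G)$ and $b(G/\Phi(G))\le b(G)$, replacing $G$ by $G/\Phi(G)$ preserves the index $|G:\bF(G)|$ while the right-hand side can only shrink, so it suffices to prove the bound under this assumption. Set $F:=\bF(G)$. Under $\Phi(G)=1$, a classical theorem of Gasch\"utz gives that $F$ is a direct product of minimal normal subgroups of $G$, each elementary abelian for some prime; in particular $F$ is a faithful completely reducible $G/F$-module (using the standard fact $\bC_G(\bF(G))\le \bF(G)$ for solvable $G$). If $G=F$ there is nothing to prove, so assume $G/F\neq 1$.

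Next I would apply Theorem~\ref{thm1} to $G/F$ acting on $F$: there exists $v\in F$ with
\[
\lambda\cdot |G/F|\ \le\ |v^{G/F}|^{\alpha}.
\]
To link the orbit size on $F$ with $b(G)$, I would use Brauer's permutation lemma to argue that $F$ and $\Irr(F)$ share the same $G$-orbit multiset (working one Sylow subgroup of $F$ at a time, since for an $\FF_pG$-module $W$ one has $|\bC_W(g)|=|\bC_{W^*}(g)|$ for all $g\in G$). Hence there is a linear character $\mu\in\Irr(F)$ with $|G:I_G(\mu)|=|\mu^G|\ge |v^{G/F}|$. Clifford's theorem then gives that every $\chi\in\Irr(G)$ lying over $\mu$ satisfies $\chi(1)\ge |G:I_G(\mu)|$, so
\[
b(G)\ \ge\ |v^{G/F}|.
\]

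Combining the two displays yields $\lambda\cdot |G:\bF(G)|\le b(G)^{\alpha}$, and since $\lambda=(24)^{1/3}>1$ the desired inequality $|G:\bF(G)|\le b(G)^{\alpha}$ follows. The substantive input is entirely Theorem~\ref{thm1}; the remainder is a routine combination of the Frattini reduction, Gasch\"utz's theorem, Brauer's permutation lemma, and Clifford theory. The only step that requires a sentence of justification is the Brauer matching of orbit sizes between $F$ and $\Irr(F)$, which I expect to be the only potential pitfall.
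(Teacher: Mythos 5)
Your overall architecture (Frattini reduction, Gasch\"utz, the orbit theorem, Clifford theory) is the same as the paper's, but the step you yourself single out as the potential pitfall is a genuine gap. Brauer's permutation lemma gives $|\bC_W(g)|=|\bC_{W^*}(g)|$ for each single element $g$, hence equal permutation characters and equal \emph{numbers} of orbits of $G$ on $F$ and on $\Irr(F)$; it does not give that the two actions have the same multiset of orbit sizes, and in particular it does not produce a $\mu\in\Irr(F)$ with $|\mu^G|\ge|v^{G/F}|$. Equal permutation characters do not determine orbit sizes: already for $S_3$, the regular $S_3$-set together with two fixed points has the same permutation character as the disjoint union of two copies of the natural $3$-point action and the $2$-point action, so the largest orbit ($6$ versus $3$) is not determined. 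The phenomenon really occurs for a module and its dual: the fours group $A\le\GL(3,2)$ generated by the two transvections $e_3\mapsto e_3+e_1$ and $e_3\mapsto e_3+e_2$ (each fixing $e_1,e_2$) has orbit sizes $1,1,1,1,4$ on $V=\FF_2^3$ but $1,1,2,2,2$ on $V^*\cong\Irr(V)$, so even the largest orbit sizes differ. (That $A$ does not act completely reducibly, so this does not refute your claim in the exact setting at hand, but it shows that Brauer's lemma cannot deliver it; you would need a genuinely different argument, and I do not know one.)

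The repair is immediate and is exactly the paper's route: do not apply Theorem~\ref{thm1} to $U=\bF(G)/\Phi(G)$ and then dualize; apply it directly to $V=\Irr(\bF(G)/\Phi(G))$, which is itself a finite, faithful, completely reducible $G/\bF(G)$-module by \cite[Proposition 12.1]{manz/wolf}. This yields $\gamma\in V$ with $\lambda\cdot|G:\bF(G)|\le|G:I_G(\gamma)|^{\alpha}$, and your Clifford-theory step (every $\psi\in\Irr(G)$ over $\gamma$ has $\psi(1)\ge|G:I_G(\gamma)|$) then finishes the proof unchanged. With that substitution the argument is correct.
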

\begin{proof}
Let $U=\bF(G)/\Phi(G))$ and $\bar G=G/\bF(G)$. $U$ is a faithful and completely reducible $\bar G$-module by Gasch\"utz's theorem ~\cite[Theorem 1.12]{manz/wolf}. Let $V=\Irr(\bF(G)/\Phi(G))$ and $V$ a faithful and completely reducible $\bar G$-module by ~\cite[Proposition 12.1]{manz/wolf}. By Theorem ~\ref{thm1}, there exists $\gamma \in V$ such that $\bar{I}=I_{\bar{G}}(\gamma)= \{\bar{g} \in \bar{G} \ |\ \gamma^{\bar{g}}=\alpha \}$ satisfies $\lambda \cdot |\bar G| \leq |\bar G: \bar I|^{\alpha}$. Consider $\alpha$ as a character of $\bF(G)$ with kernel containing $\Phi(G)$. Let $I$ be the preimage of $\bar{I}$ in $G$. Now $I=I_G(\alpha)= \{g \in G \ | \ \gamma^g=\gamma \}$. Take $\mu \in \Irr(I|\gamma)$. Now $\psi=\mu^G \in \Irr(G)$. Thus, we have $|G:\bF(G)| \leq |G:I|^{\alpha} \leq \psi(1)^{\alpha} \leq b(G)^{\alpha}$.
\end{proof}

The following are some other applications of Theorem ~\ref{thm4}.

\begin{cor} \label{cor1}
Suppose that $G$ is a finite solvable group. Then $G$ has an abelian subgroup $A$ such that $|G: A| \leq b^{4+\alpha}(G)$.
\end{cor}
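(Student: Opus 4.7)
The plan is to combine Theorem~\ref{thm4} with the classical Isaacs--Passman bound for nilpotent groups recalled in the introduction, together with the elementary observation that $b(\cdot)$ does not increase when restricting to a subgroup. The abelian subgroup will be produced inside $\bF(G)$, not directly inside $G$.

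First I would apply Theorem~\ref{thm4} to obtain $|G:\bF(G)| \leq b(G)^{\alpha}$. Second, since $\bF(G)$ is nilpotent, the Isaacs--Passman theorem \cite[Theorem 12.26]{IMIB} guarantees an abelian subgroup $A \leq \bF(G)$ with $|\bF(G):A| \leq b(\bF(G))^{4}$. Third, I would check that $b(\bF(G)) \leq b(G)$: given $\psi \in \Irr(\bF(G))$, Frobenius reciprocity shows $\psi$ appears in $\chi_{\bF(G)}$ for any irreducible constituent $\chi$ of $\psi^{G}$, which forces $\psi(1) \leq \chi(1) \leq b(G)$.

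Putting the three ingredients together gives
\[
|G:A| = |G:\bF(G)| \cdot |\bF(G):A| \leq b(G)^{\alpha} \cdot b(G)^{4} = b(G)^{4+\alpha},
\]
as required. There is no substantive obstacle: this is a clean two-step reduction, and the computation is essentially bookkeeping. The only point requiring any care is to apply the nilpotent bound to $\bF(G)$, whose largest character degree is controlled by $b(G)$, rather than attempting to invoke it on $G$ directly (which is not nilpotent in general).
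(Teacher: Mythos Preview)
Your argument is correct and is precisely the intended one: the paper states Corollary~\ref{cor1} without an explicit proof, leaving the reader to combine Theorem~\ref{thm4} with the Isaacs--Passman bound \cite[Theorem~12.26]{IMIB} applied to the nilpotent group $\bF(G)$, exactly as you do. The only ingredient you supply that the paper leaves implicit is the observation $b(\bF(G)) \leq b(G)$, which is standard and correctly justified.
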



\begin{theorem} \label{Fittinggeneral}
  Suppose that $G$ is a finite solvable group and $V$ is a finite, faithful and completely reducible $G$-module. If $G \neq 1$, then there exists $v \in V$ such that $|G:\bF(G)| \leq |v^G|^\beta$.
\end{theorem}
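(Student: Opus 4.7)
The plan is to derive Theorem \ref{Fittinggeneral} as a short consequence of Theorem \ref{thm1} combined with the classical Gasch\"utz--P\'alfy bound for solvable linear groups, rather than re-running the entire induction of Theorem \ref{thm1}. The naive attempt to adapt that induction by tracking $|G:\bF(G)|$ runs into the issue that the inequality $|G:\bF(G)|\leq |G/N:\bF(G/N)|\cdot|N:\bF(N)|$ fails in general (e.g.\ $G=S_3$, $N=A_3$), so a more economical route is preferable.

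First I would invoke Theorem \ref{thm1} to obtain $v\in V$ with $\lambda\,|G|\leq |v^G|^{\alpha}$. The additional ingredient is the classical Gasch\"utz theorem (~\cite[Theorem 1.12]{manz/wolf}): $\bF(G)/\Phi(G)$ is a faithful completely reducible $G/\bF(G)$-module. Applying the P\'alfy-type bound $|H|\leq (1/\lambda)\,|U|^{9/4}$ for a faithful solvable $H\leq\GL(U)$ (a part of \cite[Theorem 3.5]{manz/wolf}) to $H=G/\bF(G)$ and $U=\bF(G)/\Phi(G)$ produces
\[ |G:\bF(G)|\ \leq\ (1/\lambda)\,|\bF(G)/\Phi(G)|^{9/4}\ \leq\ (1/\lambda)\,|\bF(G)|^{9/4}, \]
equivalently $|\bF(G)|^{9/4}\geq \lambda\,|G:\bF(G)|$.

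Given these two inputs, the proof is a short arithmetic calculation using that $\beta/\alpha=9/13$. Raising Theorem \ref{thm1}'s estimate to the $(9/13)$-th power and expanding $|G|=|G:\bF(G)|\cdot|\bF(G)|$ gives
\[ |v^G|^{\beta}\ \geq\ (\lambda\,|G|)^{9/13}\ =\ \lambda^{9/13}\,|G:\bF(G)|^{9/13}\,|\bF(G)|^{9/13}, \]
while the P\'alfy bound in the form $|\bF(G)|^{9/13}=(|\bF(G)|^{9/4})^{4/13}\geq \lambda^{4/13}|G:\bF(G)|^{4/13}$ supplies the missing factor. Multiplying the two yields $|v^G|^{\beta}\geq \lambda\,|G:\bF(G)|\geq |G:\bF(G)|$, as required. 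The value $\beta=\tfrac{2.25}{3.25}\alpha$ is exactly what matches the exponents $9/4$ (P\'alfy for $|G:\bF(G)|$) and $13/4$ (Wolf for $|G|$ hidden inside $\alpha$) in this cancellation.

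The main obstacle is verifying the precise constant in the P\'alfy bound so that the estimate $|G:\bF(G)|\leq (1/\lambda)|\bF(G)|^{9/4}$ holds uniformly; any constant $c\leq \lambda^{9/4}\approx 10.85$ would suffice for $|v^G|^{\beta}\geq |G:\bF(G)|$, but one would still need to check the small list of exceptional solvable primitive linear groups that nearly saturate P\'alfy's inequality (in the same spirit as the exceptional case analysis in Proposition \ref{proporbit}) to ensure the constant truly works in every case. A secondary subtlety is the module $\bF(G)/\Phi(G)$ being of mixed characteristic, which must be handled by applying the P\'alfy bound to each homogeneous component and combining, but this is routine.
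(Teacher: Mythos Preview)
Your proposal is correct and follows essentially the same route as the paper: both combine Theorem~\ref{thm1} with the Gasch\"utz--P\'alfy estimate $|G:\bF(G)|\leq |\bF(G)|^{9/4}$, and the identity $\beta=\tfrac{9}{13}\alpha$ then gives $|G:\bF(G)|\leq |G|^{9/13}\leq M^{\beta}$. Your worry about the precise constant in the P\'alfy bound is unnecessary: as the paper's arithmetic shows, the constant-free inequality $|G:\bF(G)|\leq |\bF(G)|^{9/4}$ already yields $|G|\leq |\bF(G)|^{13/4}$ and hence $|G:\bF(G)|\leq |G|^{9/13}$, so no case analysis of exceptional primitive groups is needed here.
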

\begin{proof}
Since $|G| \leq |v^G|^{\alpha}$, we know that $|G:\bF(G)| \leq |\bF(G)|^{2.25}$ and thus $|G| \leq |\bF(G)|^{3.25}$. So we have that
\[|G:\bF(G)| \leq |G|^{1-\frac 1 {3.25}} \leq |G|^{\frac {2.25} {3.25}} \leq M^{{\frac {2.25} {3.25}} \cdot {\alpha}}=M^{\beta}.\]
\end{proof}

We next show that if one goes up to the second Fitting group, we shall get something much stronger. In short, we can find a better bound for $|G:\bF_2(G)|$ with respect to $b(G)$.

\begin{theorem} \label{GluckconjectureFitting}
  Let $G$ be a finite solvable group. Then $|G:\bF_2(G)| \leq b(G)^{\beta}$.
\end{theorem}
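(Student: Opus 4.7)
The plan is to mimic the proof of Theorem \ref{thm4}, but to feed the sharper orbit bound of Theorem \ref{Fittinggeneral} (with exponent $\beta$) into the Clifford correspondence, in place of the orbit bound of Theorem \ref{thm1} (with exponent $\alpha$). The payoff is that the Fitting layer for the quotient $G/\bF(G)$ becomes the second Fitting layer of $G$, so the smaller exponent lands on $|G:\bF_2(G)|$.

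First I would dispose of the trivial case $G=\bF(G)$, where $\bF_2(G)=G$ and the inequality holds. Otherwise set $\bar G = G/\bF(G)\ne 1$ and $U=\bF(G)/\Phi(G)$. By Gasch\"utz's theorem, $U$ is a faithful completely reducible $\bar G$-module, and hence so is the dual $V=\Irr(\bF(G)/\Phi(G))$, which we may regard as the set of linear characters of $\bF(G)$ with $\Phi(G)$ in the kernel. Since $\bar G$ is solvable and nontrivial, Theorem \ref{Fittinggeneral} applies to the pair $(\bar G, V)$ and produces $\gamma\in V$ with
\[|\bar G:\bF(\bar G)| \le |\gamma^{\bar G}|^{\beta}.\]
The crucial identification is $\bF(\bar G)=\bF(G/\bF(G))=\bF_2(G)/\bF(G)$, so the left-hand side equals $|G:\bF_2(G)|$.

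It remains to convert the orbit size $|\gamma^{\bar G}|$ into a character degree of $G$. Let $I=I_G(\gamma)$ be the full preimage of $\stab_{\bar G}(\gamma)$ in $G$, choose $\mu\in\Irr(I\mid\gamma)$, and set $\psi=\mu^G$. By Clifford correspondence, $\psi\in\Irr(G)$ with $\psi(1)=|G:I|\mu(1)\ge |G:I|=|\gamma^{\bar G}|$. Combining,
\[|G:\bF_2(G)|\le |\gamma^{\bar G}|^{\beta}\le \psi(1)^{\beta}\le b(G)^{\beta},\]
as required. The only mild subtlety is confirming that the hypotheses of Theorem \ref{Fittinggeneral} are genuinely met by $\bar G$ on $V$ (faithfulness from Gasch\"utz, complete reducibility from $\bF(G)/\Phi(G)$ being elementary abelian); beyond that, the hard work is already concentrated in Theorem \ref{Fittinggeneral}, and the present argument is a direct transcription of the Clifford step in the proof of Theorem \ref{thm4}.
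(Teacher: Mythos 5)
Your proposal is correct and follows essentially the same route as the paper: apply Theorem \ref{Fittinggeneral} to the faithful completely reducible $\bar G$-module $V=\Irr(\bF(G)/\Phi(G))$, identify $\bF(\bar G)$ with $\bF_2(G)/\bF(G)$, and finish via Clifford correspondence. Your explicit handling of the trivial case $G=\bF(G)$ is a small tidiness the paper omits, but the argument is otherwise identical.
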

\begin{proof}
Let $U=\bF(G)/\Phi(G)$ and $\bar G=G/\bF(G)$. $U$ is a faithful and completely reducible $\bar G$-module by Gasch\"utz's theorem ~\cite[Theorem 1.12]{manz/wolf}. Let $V=\Irr(\bF(G)/\Phi(G))$ and $V$ is a faithful and completely reducible $\bar G$-module by ~\cite[Proposition 12.1]{manz/wolf}. By Theorem ~\ref{Fittinggeneral}, there exists $\gamma \in V$ such that $\bar{I}=I_{\bar{G}}(\gamma)= \{\bar{g} \in \bar{G} \ | \ \gamma^{\bar{g}}=\gamma \}$ satisfies $|\bar{G}/\bar{I}|^{\beta} \geq {|\bar{G}/\bF(\bar{G})|}$. Consider $\gamma$ as a character of $\bF(G)$ with kernel containing $\Phi(G)$. Let $I$ be the preimage of $\bar{I}$ in $G$. Now $I=I_G(\gamma)= \{g \in G \ | \ \gamma^g=\gamma \}$. Take $\mu \in \Irr(I|\gamma)$. Now $\psi=\mu^G \in \Irr(G)$. Thus, we have $|G:\bF_2(G)| \leq |G:I|^{\beta} \leq \psi(1)^{\beta} \leq b(G)^{\beta}$.
\end{proof}




We next study a related question of Navarro. Navarro has conjectured that for solvable groups, $\prod_{p \in \pi(G)} b(G_p) \leq b(G)$ where $\pi(G)$ is the set of prime divisors of $|G|$ and $G_p \in \Syl_p(G)$ (see ~\cite[Conjecture 5]{Moret1}). This question is pretty challenging and we believe that if it is correct, then the proof will involve many delicate analyses. Not much has been done on this problem except for using some global estimations. Here we use a global approach and see what kind of upper bound can we get. The following is an easy consequence of Theorem ~\ref{thm4}, and it improves ~\cite[Corollary 2.8]{MOWOLF}. 

\begin{theorem}
If $G$ is solvable, then $\prod_{p \in \pi(G)} b(G_p) \leq b^{\alpha+1}(G)$ where $G_p \in \Syl_p(G)$.
\end{theorem}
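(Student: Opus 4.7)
The plan is to combine Theorem~\ref{thm4} with a short character-theoretic reduction that converts the bound on $|G:\bF(G)|$ into the desired product bound. The key observation is that $\bF(G)$ is nilpotent, hence a direct product $\bF(G) = \prod_{p \in \pi(G)} P_p$ of its Sylow subgroups $P_p \in \Syl_p(\bF(G))$ (with $P_p = 1$ whenever $p \nmid |\bF(G)|$). Since irreducible characters of a direct product are external tensor products of irreducibles of the factors, $b(\bF(G)) = \prod_{p \in \pi(G)} b(P_p)$. Combined with the standard fact that $b(N) \leq b(G)$ for any $N \trianglelefteq G$ (any $\theta \in \Irr(N)$ lies under some $\chi \in \Irr(G)$, so $\theta(1) \leq \chi(1) \leq b(G)$), this yields $\prod_p b(P_p) \leq b(G)$.

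Next I would pass from $P_p$ to $G_p$ using Frobenius reciprocity. For $\chi \in \Irr(G_p)$ achieving $\chi(1) = b(G_p)$, any irreducible constituent $\theta$ of $\chi_{P_p}$ satisfies $\chi \mid \theta^{G_p}$, hence $\chi(1) \leq \theta^{G_p}(1) = |G_p:P_p|\,\theta(1) \leq |G_p:P_p|\cdot b(P_p)$. Taking the product over $p \in \pi(G)$, and observing that $\prod_p |G_p:P_p| = |G|/|\bF(G)| = |G:\bF(G)|$, I obtain
\[
\prod_{p \in \pi(G)} b(G_p) \ \leq\ |G:\bF(G)| \cdot \prod_{p \in \pi(G)} b(P_p) \ \leq\ |G:\bF(G)| \cdot b(G).
\]

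Finally, applying Theorem~\ref{thm4} to the first factor gives $|G:\bF(G)| \leq b(G)^{\alpha}$, whence $\prod_p b(G_p) \leq b(G)^{\alpha+1}$, as required. Since every step is elementary once Theorem~\ref{thm4} is available, no serious obstacle is expected; the substantive content of the statement is entirely absorbed by Theorem~\ref{thm4}, and the proof reduces to bookkeeping with Sylow indices and Frobenius reciprocity.
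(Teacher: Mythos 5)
Your proof is correct and is precisely the standard derivation the paper has in mind: the paper states this result without proof, calling it ``an easy consequence of Theorem~\ref{thm4}'' (it is the same reduction used for \cite[Corollary 2.8]{MOWOLF}), namely $\prod_p b(G_p) \leq |G:\bF(G)|\cdot b(\bF(G)) \leq |G:\bF(G)|\cdot b(G)$ followed by Theorem~\ref{thm4}. All the individual steps (writing $\bF(G)=\prod_p O_p(G)$, the tensor decomposition of $\Irr$ of a direct product, $b(N)\leq b(G)$ for $N\trianglelefteq G$, and the Frobenius reciprocity bound $b(G_p)\leq |G_p:O_p(G)|\,b(O_p(G))$) check out.
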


\begin{remark}
For solvable linear groups, it is not always true that $|G| \leq M^2$, see for example ~\cite{WOLF1}. Thus, it seems to the author that one cannot prove Gluck's conjecture using orbit theorem alone.
\end{remark}

\section{Further generalization to arbitrary groups} \label{sec:further}

In this section, we further generalize the results to arbitrary linear groups.

Given a chief series
$$\Delta: 1=G_0< G_1< \cdots G_n=G$$ of a finite group $G$.
Let
${\rm Ord}_{\mathcal{S}}(G)$ denote the product of orders of all solvable chief factors $G_i/G_{i-1}$ in $\Delta$. Let $\mu(G)$ be the number of nonabelian chief factors in $\Delta$. Clearly, the constants ${\rm Ord}_{\mathcal{S}}(G)$ and $\mu(G)$ are independent of the choice of chief series $\Delta$ of $G$. We remark that as an application of Theorem ~\ref{thm4}, one can strengthen the solvable case of ~\cite[Theorem 4.7]{QianYangLarge}.

\begin{theorem}  \label{t101}
Let a finite group $G$ act faithfully on a finite group $V$,
and $M$ be the largest orbit size in the action of $G$ on
$V$. Then any one of the following conditions guarantees that
$$ M^{\alpha} \geq 2^{\mu(G)}\cdot {\rm Ord}_{\mathcal{S}}(G)$$,

{\rm (1)} $V$ is a $p$-group and $O_p(G)=1$ for some prime $p$;

{\rm (2)} $V$ is  a completely reducible $G$-module, possibly of mixed characteristic.
\end{theorem}
\begin{proof}

(1) Suppose first that $V$ is a $p$-group and $O_p(G)=1$.
By ~\cite[Proposition 3.5]{QianYangLarge},
$G$ has a solvable subgroup  $H$ with $O_p(H)=1$
such that ${\rm Ord}_{\mathcal{S}}(H)\geq 2^{\mu(G)}\cdot {\rm Ord}_{\mathcal{S}}(G)$.
Assume that $H< G$.
By induction, there exists an element $v\in V$ such that
$|H: C_H(v)|^{\alpha}\geq {\rm Ord}_{\mathcal{S}}(H)$.
Then $$M^{\alpha}\geq |G: C_G(v)|^{\alpha}\geq |H: C_H(v)|^{\alpha}
\geq {\rm Ord}_{\mathcal{S}}(H)
\geq 2^{\mu(G)}\cdot{\rm Ord}_{\mathcal{S}}(G),$$
and we are done.
Hence, we may assume that
$G=H$ is solvable.

Set $U=G\ltimes V$. It is clear that $F(U)$ is nilpotent, so we can write $F(K) = P \times Q$, where $P$ is a $p$-group and $Q$ is a $p'$-group. Note that both $P$ and $Q$ are normal in $U$, which implies that $PV/V$ is a normal $p$-subgroup of $U/V \cong G$ and $[Q,V] \leq Q \cap V = 1$.
Now the assumptions that $O_p(G) = 1$ and $\bC_G(V) = 1$ guarantee that $P \leq V$ and $Q = 1$, so $F(U) = P \leq  V$. But the converse containment is clear, so $\Phi(U) \leq F(U) = V$. If $\Phi(U) = V$, then $U = GV = G \Phi(U)$, which forces $U = G$ and hence $V = 1$, a trivial case. Thus we have $F(U)=V$ and $\Phi(U)< V$.

Set $$\overline{G}=G\Phi(U)/\Phi(U), \overline{V}=V/\Phi(U), \overline{U}=U/\Phi(U) =\overline{G} \ltimes \overline{V}.$$
Observe that $\overline{G}$ also acts faithfully on $\overline{V}$. We note this assertion is equivalent to saying that $G$ acts faithfully on $\overline{V}$. Let $C = \bC_G(\overline{V})$, and take $A$ to be a $p'$-subgroup of $C$. Then $[V,A] \leq \Phi(U)$. Since $A$ acts coprimely on $V$, we have $V = [V, A] \bC_V(A) = \Phi(U) \bC_V(A)$ and thus $U = GV = \Phi(U) G \bC_V(A)$. This forces $GV = G\bC_V(A)$ and hence $V = \bC_V(A)$. Note that we are assuming that $\bC_G(V) = 1$, so $A = 1$, which means that $C$ is a $p$-group. But $O_p(G) = 1$ by hypotheses, so $C = 1$, as required.


Assume that $\Phi(U)>1$.
By induction,  there exists an element $v\in V$ such
that $|\overline{G}: C_{\overline{G}}(\overline{v})|^{\alpha}
\geq {\rm Ord}_{\mathcal{S}}(\overline{G})$.
Since $$|G: C_G(v)|\geq |\overline{G}: C_{\overline{G}}(\overline{v})|\,\,{\rm  and}\,\,
G\cong \overline{G},$$
we get that $M^{\alpha} \geq |G: C_G(v)|^{\alpha}\geq {\rm Ord}_{\mathcal{S}}(G)$, and we are done.
Therefore, we may assume that $\Phi(U)=1$.
Now $V$ is a faithful and  completely  reducible $G$-module over a field of characteristic $p$. Thus the result follows by Theorem ~\ref{thm1}.

\bigskip

(2) Suppose that  $V$ is a finite, faithful and completely reducible $G$-module.
We work by induction on $|G|+|V|$.
Assume that $V=V_1\oplus V_2$, where $V_1, V_2$ are nontrivial $G$-submodules of $V$.
Let $K=C_G(V_1)$.
Observe that
$V_1$ is a faithfully and completely reducible $G/K$-module, while
 $V_2$ is a faithfully and completely reducible $K$-module (note that $K$ is normal in $G$).
By induction,
there exist $v_1\in V_1$ and $v_2\in V_2$ such that
$$|G: C_G(v_1)|^{\alpha}=|G/K : C_{G/K}(v_1)|^{\alpha}
\geq 2^{\mu(G/K)}\cdot{\rm Ord}_{\mathcal{S}}(G/K),$$
$$ |K: C_K(v_2)|^{\alpha}\geq 2^{\mu(K)}\cdot{\rm Ord}_{\mathcal{S}}(K).$$
Since ${\rm Ord}_{\mathcal{S}}(G/K) \cdot {\rm Ord}_{\mathcal{S}}(K)
\geq {\rm Ord}_{\mathcal{S}}(G)$ and $\mu(G/K)+\mu(K)\geq \mu(G)$,
we get that
$$\begin{array}{ccl}M^{\alpha} &\geq & |G: C_G(v_1+v_2)|^{\alpha}= |G: C_G(v_1)\cap C_G(v_2)|^{\alpha}\\
&=&|G: C_G(v_1)|^{\alpha}|C_G(v_1): C_{C_G(v_1)}(v_2)|^{\alpha}\\
&= & |G:C_G(v_1)|^{\alpha}|K: C_K(v_2)|^{\alpha}\\
&=&2^{\mu(G/K)}\cdot{\rm Ord}_{\mathcal{S}}(G/K)\cdot 2^{\mu(K)}\cdot{\rm Ord}_{\mathcal{S}}(K)\\
&\geq & 2^{\mu(G)}\cdot{\rm Ord}_{\mathcal{S}}(G),\end{array}$$
and we are done.
Hence, we may assume that
$V$ is an irreducible $G$-module over a finite field with $p$ elements.
Clearly $O_p(G)=1$ because $G$ acts faithfully on $V$.
Now the required result follows by (1).
\end{proof}

\begin{lemma} \label{l406}
Let $E=W_1\times \cdots \times W_n$
be a direct product of isomorphic nonabelian
simple groups $W_i$. Assume that  $G=\Aut(E)$ and $E$ is minimal normal in $G$.
Then  ${b}(E)^{\alpha}\geq 2^{\mu(G)}\cdot {\rm Ord}_{\mathcal{S}}(G)$.
\end{lemma}
\begin{proof}
Set $W\cong W_1$. We claim first that  ${b}(W)\geq 2|\Out(W)|$.
Assume that  $W$ is isomorphic to a sporadic simple group or  the Tits group
or an alternating group ${\rm Alt}_n$ with $n\not=6$.
Then ${\rm Ord}_{\mathcal{S}}(G)\leq 2$ and thus ${b}(W)\geq 5\geq 2|\Out(W)|$.
For $W\cong {\rm Alt}_6$, the claim  follows by a directly calculation.
Assume that  $W$ is of Lie type over a finite field of characteristic $p$.
Let $\St$ be the Steinberg character of $W$. Then
$\St(1)=|W|_p$, and ~\cite[Lemma 3.1]{QianYangLarge} implies the claim.
Now the claim implies that ${b}(E)\geq  2^n|{\rm Out}(W)|^n$.

Let $D=\bigcap_{i=1}^n N_G(W_i)$. Note that $\bigcap_{1\leq i\leq n} C_D(W_i)W_i=E$.
Since all $W_i$ are normal in $D$, we get that
$$\begin{array}{ccl}
D/E &=& D/\bigcap_{1\leq i\leq n} W_iC_D(W_i)\leq D/W_1C_D(W_1) \times \cdots
\times D/W_nC_D(W_n)\\
&=&N_D(W_1)/W_1C_D(W_1)\times \cdots \times N_D(W_n)/W_nC_D(W_i)\\
&\leq & {\rm Out}(W_1)\times \cdots \times {\rm Out}(W_n).\end{array}$$
In particular, $D/E$ is solvable with   $|D/E|\leq |\Out(W)|^n$.
Since $G$ acts on $\{W_1, \ldots, W_n\}$ with the kernel $D$,
$G/D$ is a permutation group of degree $n$.
Observe that ${\rm Ord}_{\mathcal{S}}(G)={\rm Ord}_{\mathcal{S}}(G/E)$ and
$\mu(G)=1+\mu(G/D)$.
By ~\cite[Theorem 4.1]{QianYangLarge}, we have that
$2^{\mu(G/D)}\cdot {\rm Ord}_{\mathcal{S}}(G/D)\leq \lambda^{n-1}$,
and this implies that
$$(2^{\mu(G)}\cdot {\rm Ord}_{\mathcal{S}}(G/D))^{1/{\alpha}}
\leq (2\lambda^{n-1})^{1/{\alpha}}< 2^n.$$
Thus $${b}(E)^{\alpha} \geq (2^n|\Out(W)|^n)^{\alpha} \geq  (2^{\mu(G)}\cdot{\rm Ord}_{\mathcal{S}}(G/D))|D/E|\geq 2^{\mu(G)}\cdot{\rm Ord}_{\mathcal{S}}(G),$$
and we are done.
\end{proof}

The following result generalizes Theorem ~\ref{thm4}.

\begin{theorem} \label{thm4application}
Let $G$ be a finite group.  Then
$2^{\mu(G)}\cdot {\rm Ord}_{\mathcal{S}}(G/\bF(G))) \leq b^{\alpha}(G)$.
\end{theorem}
\begin{proof}
Clearly  for any subgroup or quotient group $L$ of $G$,
we have ${b}(L) \leq {b}(G)$.
Since $(G/\Phi(G))/(\bF(G/\Phi(G)))\cong G/\bF(G)$,
by induction we may assume that
$\Phi(G)=1$.

\bigskip

Case 1. Suppose that $G$ contains a nonsolvable minimal normal subgroup $D$.

Write $C_G(D)=C$.
By Lemma \ref{l406}, there exists $\mu\in {\rm Irr}(D)$  such that
$$\mu(1)^{\alpha} \geq  2^{\mu(G/C)}\cdot{\rm Ord}_{\mathcal{S}}(G/C).$$
Clearly $$\bF(C)=\bF(G)\leq C<G.$$
By induction, there exists $\nu\in {\rm Irr}(C)$  such that
$$\nu(1)^{\alpha}\geq 2^{\mu(C/\bF(G))}\cdot {\rm Ord}_{\mathcal{S}}(C/\bF(G)).$$
Since $DC=D\times C$, we see that  $\mu\nu\in {\rm Irr}(DC)$ has degree $\mu(1)\nu(1)$.

Note that ${\rm Ord}_{\mathcal{S}}(G/C)\cdot {\rm Ord}_{\mathcal{S}}(C/\bF(G))\geq {\rm Ord}_{\mathcal{S}}(G/\bF(G))$ by ~\cite[Lemma 2.3(2)]{QianYangLarge} and that
$\mu(G/C)+\mu(C/\bF(G))\geq \mu(G/\bF(G))$. We get  that
$$\begin{array}{ccl}
{b}^{\alpha}(G) &\geq &  {b}^{\alpha}(DC) \geq (\mu(1)\nu(1))^{\alpha}\\
&\geq &2^{\mu(G/C)}\cdot{\rm Ord}_{\mathcal{S}}(G/C)\cdot 2^{\mu(C/\bF(G))}\cdot {\rm Ord}_{\mathcal{S}}(C/\bF(G)\\
&\geq&2^{\mu(G)}\cdot {\rm Ord}_{\mathcal{S}}(G/\bF(G)),\end{array}$$
and we are done.

\bigskip

Case 2. Suppose that all minimal normal subgroups of $G$ are solvable. Then the generalized Fitting subgroup of $G$ equals to  $\bF(G)$.
By $\Phi(G)=1$, $G/\bF(G)$ acts faithfully and completely reducibly on $\bF(G)$ and on ${\rm Irr}(\bF(G))$,
now the required result follows by Theorem \ref{t101}.
\end{proof}




\bigskip

\noindent
\textbf{Acknowledgement} \label{sec:Acknowledgement}

This work was partially supported a grant from the Simons Foundation (No 499532).\\






\end{document}